\definecolor{gr}{rgb}{0.7, 0.0, 0.15}
\newtheorem{theorem}{\bf Theorem}[section]
\newtheorem{corollary}{\bf Corollary}[section]
\newtheorem{remark}{\bf Remark}[section]
\newtheorem{remarks}{\bf Remarks}[section]
\newtheorem{lemma}{\bf Lemma}[section]
\newcommand\ceil[1]{\left\lceil#1\right\rceil}
\numberwithin{equation}{section}
\begin{document}
\title{Bounds on Negative Binomial Approximation to Call Function}
\author[ ]{{\Large Amit N. Kumar}}
\affil[ ]{Department of Mathematical Sciences}
\affil[ ]{Indian Institute of Technology (BHU)}
\affil[ ]{Varanasi-221005, India.}
\affil[ ]{Email: amit.kumar2703@gmail.com}
\date{}
\maketitle

\begin{abstract}
\noindent
In this paper, we develop Stein's method for negative binomial distribution using call function defined by $f_z(k)=(k-z)^+=\max\{k-z,0\}$, for $k\ge 0$ and $z \ge 0$. We obtain error bounds between $\mathbb{E}[f_z(\text{N}_{r,p})]$ and $\mathbb{E}[f_z(V)]$, where $\text{N}_{r,p}$ follows negative binomial distribution and $V$ is the sum of locally dependent random variables, using certain conditions on moments. We demonstrate our results through an interesting application, namely, collateralized debt obligation (CDO), and compare the bounds with the existing bounds. 
\end{abstract}

\noindent
\begin{keywords}
Negative binomial distribution; call function; error bounds; Stein's method; CDO.
\end{keywords}\\
{\bf MSC 2010 Subject Classifications:} Primary: 62E17, 62E20; Secondary: 60F05, 60E05.

\section{Introduction}\label{10:sec1}
The call function is a non-negative real-valued function of the form
\begin{align}
f_z(k)=(k-z)^+=\max\{k-z,0\}, \quad \text{for }k\ge 0 \text{ and } z\ge 0.\label{10:cf}
\end{align}
It has been used in several areas of probability and statistics, for example, finance, risk theory, and derivative pricing, among many others. In particular, it has been successfully applied to the collateralized debt obligation (CDO). For more details, see Karoui and Jiao \cite{JK}, Karoui {\em et al.} \cite{JKK},  Hull and White \cite{HW}, Neammanee and Yonghint \cite{NY}, Yonghint {\em et al.} \cite{YNC2}, and references therein.\\
For a random variable (rv) $W$, the study of $\mathbb{E}[f_z(W)]$ plays an important role in many real-life applications. For example, if $W$ is the sum of Bernoulli random variables (rvs) then $\mathbb{E}[f_z(W)]$ is used to compute the mean value of total percentage loss for each tranche in CDO (see Neammanee and Yonghint \cite{NY}, and Yonghint {\em et al.} \cite{YNC2} for details). Also, if $W$ has a complicated structure, for example, $W$ is the sum of locally dependent or independent (but non-identical) rvs, then $\mathbb{E}[f_z(W)]$ becomes difficult to compute in practice. In such cases, an approximation to standard and easy-to-use distribution is of interest. Approximation to call function has been studied by several authors in the literature, for example, Poisson approximation has been studied by Neammanee and Yonghint \cite{NY}, and Yonghint {\em et al.} \cite{YNC2}, and Normal approximation has been studied by Karoui and Jiao \cite{JK}, and Karoui {\em et al.} \cite{JKK}. \\
In this paper, we study negative binomial (NB) approximation to call function using certain conditions on moments. The main advantage of NB distribution over Poisson distribution is the extra flexibility parameter that builds our bounds more shaper compare to the existing bounds for Poisson approximation. Throughout this paper, let $\text{N}_{r,p}$ follow NB distribution with probability mass function
\begin{align}
\mathbb{P}(\text{N}_{r,p}=k)=\binom{r+k-1}{k}p^{r} q^{k}, \quad k\in \mathbb{Z}_+,\label{10:nb}
\end{align}
where $r>1$, $q=1-p \in (0,1)$ and $\mathbb{Z}_+=\{0,1,2,\ldots\}$, the set of non-negative integers. From Neammanee and Yonghint \cite{NY}, and Yonghint {\em et al.} \cite{YNC2}, We observe that the call function can be studied under a locally dependent or independent setup. Therefore, we consider the following locally dependent structure that can be used for both cases. \\
Let $J$ be a finite subset of $\mathbb{N}=\{1,2,\ldots\}$, the set of all positive integers, and $\{\zeta_i\}_{i\in J}$ be a collection of non-negative rvs. For each $i$, let $i\in A_i\subseteq B_i\subset J$ be such that $\zeta_i$ is independent of $\zeta_{A_i^c}$ and $\zeta_{A_i}$ is independent of $\zeta_{B_i^c}$, where $\zeta_A$ is the collection of rvs $\{\zeta_i\}_{i\in A}$ and $A^c$ denotes the complement of the set $A$. See Section 3 of R\"{o}llin \cite{RO2008} and Section 2 of Kumar \cite{k2021} for a similar type of locally dependent structure.  Define
\begin{align}
V=\sum_{i\in J}\zeta_i,\label{10:v}
\end{align}
the sum of locally dependent rvs. Note that if $A_i=B_i=\{i\}$ then $V$ is the sum of independent rvs. Throughout this paper, we let $\zeta_{A}=\sum_{i\in A} \zeta_i$, for a set $A\subset J$, and $\mathscr{D}(W):=2d_{TV}(W,W+1)$, for a rv $W$, where $d_{TV}(X,Y)$ denotes the total variation distance between $X$ and $Y$. In this paper, our aim is to study the proximity between $\mathbb{E}[(V-z)^+]$ and $\mathbb{E} [(\text{N}_{r,p}-z)^+]$. That is, our interest is to obtain the upper bound for 
\begin{align}
\left|\mathbb{E}[(V-z)^+]-\mathbb{E} [(\text{N}_{r,p}-z)^+]\right|.\label{10:bd}
\end{align}
We use Stein's method to obtain the bound for the above expression discussed in Section \ref{10:sec2}. 

\noindent
This paper is organized as follows. In Section \ref{10:sec2}, we develop Stein's method for NB distribution using the call function. In Section \ref{10:sec3}, we obtain uniform and non-uniform bounds for the expression given in \eqref{10:bd} and compare our results with the existing results. In Section \ref{10:sec3}, we give an application of our results to CDO and give some numerical comparisons. Finally, in Appendix A, we give some inequalities and their proofs that are useful to develop Stein's method for NB distribution.

\section{Stein's Method}\label{10:sec2}
Stein's method (Stein \cite{stein1972}) is a tool for obtaining error bounds between two probability distributions. This method is mainly based on obtaining the solution of the Stein equation given by
\begin{align}
\mathscr{A} g(k)=f(k)-\mathbb{E} f(X),\quad \text{for }k\in \mathbb{Z}_+,\label{10:se}
\end{align}
where $\mathscr{A}$ is a Stein operator for a rv $X$ such that $\mathbb{E}[\mathscr{A} g(X)]=0$, $f$ and $g$ are real-valued bounded functions on $\mathbb{Z}_+$. Stein's method has been developed for NB distribution by Brown and Phillips \cite{BP} and Barbour {\em et al.} \cite{BGX} for total variation distance and Wasserstein distance, respectively. In this section, we develop Stein's method for NB distribution when $f$ is a call function, defined in \eqref{10:cf}, which is used to obtain upper bounds for the expression given in \eqref{10:bd}. The NB approximation via Stein's method has been studied by several authors such as Barbour {\em et al.} \cite{BGX}, Brown and Phillips \cite{BP}, Vellaisamy {\em et al.} \cite{VUC}, Wang and Xia \cite{WX}, Kumar and Upadhye \cite{KU}, among many others.\\
Next, let $X= \text{N}_{r,p}$ and $f=f_z$, defined in \eqref{10:cf}, then the Stein equation \eqref{10:se} leads to
\begin{align}
\mathscr{A} g(k)=(k-z)^+-\mathbb{E} [(\text{N}_{r,p}-z)^+],\quad\text{for } k\in \mathbb{Z}_+\text{ and }z\ge 0.\label{10:ses}
\end{align}
Also, let $g=g_z$ be the solution of the above equation. Now, replacing $k$ by $V$ in \eqref{10:ses} and taking expectation, we get
\begin{align}
\left|\mathbb{E}[\mathscr{A}g_z(V)]\right|=\left|\mathbb{E}[(V-z)^+]-\mathbb{E} [(\text{N}_{r,p}-z)^+]\right|. \label{10:mb}
\end{align}
Therefore, to obtain the upper bound for the expression given in \eqref{10:bd}, it is enough to obtain the upper bound for $\left|\mathbb{E}[\mathscr{A} g_z(V)]\right|$. \\
Next, the Stein operator of $\text{N}_{r,p}$ is given by
\begin{align}
\mathscr{A} g(k)=q(r+k)g(k+1)-kg(k),\quad \text{for }k\in \mathbb{Z}_+.\label{10:so}
\end{align}
See Lemma 1 of Brown and Phillips \cite{BP} for details. Substituting \eqref{10:so} in \eqref{10:ses}, we get
\begin{align}
q(r+k)g(k+1)-kg(k)=(k-z)^+-\mathbb{E} [(\text{N}_{r,p}-z)^+].\label{10:sess}
\end{align} 
It can be easily verified that the solution of \eqref{10:sess} is
\begin{align}
g_z(k)&=\left\{\begin{array}{ll}
0 & \text{if } k=0;\\
-\displaystyle{\sum_{j=k}^{\infty}\frac{r(r+1)\ldots (r+j-1)}{r(r+1)\ldots (r+k-1)}\frac{(k-1)!}{j!}q^{j-k}[(j-z)^+-\mathbb{E} [(\text{N}_{r,p}-z)^+]]} & \text{if }k\ge1.
\end{array}\right.\label{10:sol}
\end{align}
For more details, see Section 2 of Kumar {\em et al.} \cite[p. 4]{KUV} with appropriate changes. Now, we move to obtain uniform and non-uniform upper bound for $|g_z(\cdot)|$ and $|\Delta g_z(\cdot)|$, where $\Delta g(k)=g(k+1)-g(k)$ denotes the first forward difference operator. Some of the proofs of the following results are similar to the proofs given by Neammanee and Yonghint \cite{NY}.

\begin{lemma}\label{10:le2}
For $k\ge 0$ and $z\ge 0$, $g_z$ defined in \eqref{10:sol} satisfies the following:
\begin{enumerate}
\item[(i)] $|g_z(k)|\le p^{-(r+1)}$.
\item[(ii)] $|\Delta g_z(k)|\le 2p^{-(r+1)}-p^{-1}$.
\end{enumerate}
\end{lemma}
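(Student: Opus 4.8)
The goal is to bound the Stein solution $g_z$ in \eqref{10:sol} and its first difference. I would work directly from the closed form \eqref{10:sol}. First, rewrite the summand by noting that
\[
\frac{r(r+1)\cdots(r+j-1)}{r(r+1)\cdots(r+k-1)}\frac{(k-1)!}{j!}q^{j-k}
=\frac{(k-1)!\,(r+k-1)!\,}{(r+j-1)!\,j!/(j-1)!\,\cdots}\,,
\]
more usefully as a ratio of negative-binomial weights: with $u_j:=\mathbb P(\mathrm N_{r,p}=j)=\binom{r+j-1}{j}p^rq^j$ one checks that the coefficient equals $\dfrac{u_j}{u_{k-1}}\cdot\dfrac{(k-1)}{(r+k-1)}\cdot\dfrac1{q}$ up to an elementary factor; this reduces $g_z(k)$ to a tail average of the centred function $h(j):=(j-z)^+-\mathbb E[(\mathrm N_{r,p}-z)^+]$ against NB weights. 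The key analytic input is that $h$ has exactly one sign change in $j$ (it is negative for small $j$, positive for large $j$, since $(j-z)^+$ is non-decreasing and convex and its NB-mean is subtracted), so the tail sums $\sum_{j\ge k}u_j h(j)$ are maximised in absolute value at the sign-change location and are controlled by $\sum_j u_j|h(j)| \le 2\,\mathbb E[(\mathrm N_{r,p}-z)^+]$ or, more crudely, by the total mass bounds $0\le (j-z)^+\le j$. Assembling the elementary prefactors $\frac{(k-1)!\,\Gamma(r+k-1)}{\Gamma(r+k-1)\cdots}$ and using $(k-1)/(r+k-1)<1$, $p^{-r}\le p^{-(r+1)}$, etc., yields (i).

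For (ii), I would use the Stein equation itself rather than differencing \eqref{10:sol} term by term. From \eqref{10:sess},
\[
\Delta g_z(k)=g_z(k+1)-g_z(k)=\frac{(k-z)^+-\mathbb E[(\mathrm N_{r,p}-z)^+]-q(r+k)g_z(k+1)+q(r+k)g_z(k+1)-\cdots}{\cdots},
\]
but cleaner: solve \eqref{10:sess} for $g_z(k+1)$ to get $q(r+k)g_z(k+1)=k\,g_z(k)+(k-z)^+-\mathbb E[(\mathrm N_{r,p}-z)^+]$, hence
\[
\Delta g_z(k)=g_z(k+1)-g_z(k)=\frac{k\,g_z(k)+(k-z)^+-\mathbb E[(\mathrm N_{r,p}-z)^+]}{q(r+k)}-g_z(k).
\]
Now one needs sharper information than part (i) alone, because the naive bound $|k g_z(k)|\le k p^{-(r+1)}$ does not cancel the $q(r+k)$ in the denominator uniformly in $k$. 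The resolution, as in Neammanee and Yonghint \cite{NY}, is to exploit the monotonicity of $g_z$: because $h(j)$ changes sign once, $g_z$ is first decreasing then increasing (or has a single well), so $\Delta g_z(k)$ has a controlled sign pattern, and one bounds $|\Delta g_z(k)|$ by its value at the extreme indices $k=0$ and the sign-change point, where it telescopes against $\sum_j u_j|h(j)|$. Tracking constants gives $|\Delta g_z(k)|\le 2p^{-(r+1)}-p^{-1}$, the two terms coming respectively from the $g_z(k+1)$ and $g_z(k)$ contributions after the $q(r+k)$ denominator is absorbed using $q(r+k)\ge q r$ and $p^{-r}/(qr)\le$ the stated quantity; the inequalities from Appendix A referenced in the excerpt are presumably exactly these elementary estimates on $p^{-r}/(q(r+k))$ and on the monotone rearrangement.

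**Main obstacle.** The hard part is part (ii): getting the \emph{exact} constant $2p^{-(r+1)}-p^{-1}$ rather than a cruder $Cp^{-(r+1)}$. This requires the single-sign-change / monotonicity structure of $g_z$ to be established carefully (so that $|\Delta g_z|$ is attained at a boundary index and equals a difference of two tail sums rather than their sum), together with tight handling of the $q(r+k)$ denominator — using $r>1$ so that $r+k\ge r>1$ and the factor $k/(r+k)<1$ — so that the two contributing pieces combine into exactly $2p^{-(r+1)}-p^{-1}$ and not more. I expect the argument to mirror the Poisson computation in \cite{NY} with $\lambda$ replaced by the NB mean $rq/p$ and an extra $p^{-1}$-type factor appearing from the $q(r+k)$ normalisation; verifying that this substitution produces the clean closed form is the step demanding the most care, and is presumably where Appendix A is invoked.
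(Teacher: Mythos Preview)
Your proposal substantially overcomplicates what is, in the paper, a very direct triangle-inequality argument. You have misidentified the ``main obstacle'': no sign-change analysis, no monotonicity of $g_z$, and no use of the Stein recursion \eqref{10:sess} is needed anywhere in the proof of (ii).

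The paper's argument is simply this. For $k\ge1$, split the sum in \eqref{10:sol} into the piece carrying $(j-z)^+$ and the piece carrying the constant $\mathbb{E}[(\mathrm N_{r,p}-z)^+]$. The first piece is shown to lie in $\bigl[0,\,p^{-(r+1)}\bigr]$ by the crude bound $(j-z)^+\le j$ together with the elementary series estimate of Lemma~\ref{10:po2}(i). The second piece lies in $\bigl[0,\,p^{-(r+1)}-p^{-1}\bigr]$ by Lemma~\ref{10:po2}(ii) and the moment bound $\mathbb{E}[(\mathrm N_{r,p}-z)^+]\le rq/p$ of Lemma~\ref{10:po1}(i). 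This already gives (i). For (ii), write $\Delta g_z(k)=A_1(k)+A_2(k)$, where $A_1$ is the difference of the two ``$(j-z)^+$'' tails at $k$ and $k+1$, and $A_2$ the corresponding difference of the ``expectation'' tails. Since each tail in $A_1$ is a number in $[0,p^{-(r+1)}]$, their difference satisfies $|A_1(k)|\le p^{-(r+1)}$; likewise $|A_2(k)|\le p^{-(r+1)}-p^{-1}$. Adding gives exactly $2p^{-(r+1)}-p^{-1}$.

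Thus the constant you flagged as requiring delicate cancellation is just the sum $p^{-(r+1)}+\bigl(p^{-(r+1)}-p^{-1}\bigr)$ of two crude bounds. Your route via the recursion $\Delta g_z(k)=\bigl((pk-qr)g_z(k)+h(k)\bigr)/\bigl(q(r+k)\bigr)$ and monotonicity might be made to work, but you have not actually carried it out --- ``tracking constants gives $2p^{-(r+1)}-p^{-1}$'' is an assertion, not an argument --- and it is not what the paper does.
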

\begin{proof}
\begin{enumerate}
\item[(i)] As $g_z(0)=0$, it is enough to prove the result for $k\ge 1$. Consider
\begin{align}
0&<\sum_{j=k}^{\infty}\frac{r(r+1)\ldots (r+j-1)}{r(r+1)\ldots (r+k-1)}\frac{(k-1)!}{j!}q^{j-k}(j-z)^+\nonumber\\
&\le 1+\sum_{j=k+1}^{\infty}\frac{r(r+1)\ldots (r+j-1)}{r(r+1)\ldots (r+k-1)}\frac{(k-1)!}{(j-1)!}q^{j-k}\nonumber\\
&=1+\sum_{j=k+1}^{\infty} \frac{(r+k) \ldots (r+j-1)}{k(k+1)\ldots (j-1)}q^{j-k}\nonumber\\
&=1+\sum_{j=1}^{\infty}\frac{(r+k)\ldots (r+j+k-1)}{k(k+1)\ldots (j+k-1)}q^j\nonumber\\
&\le p^{-(r+1)},\quad \text{(using Lemma \ref{10:po2}(i)).}\label{10:in1}
\end{align}
Next, consider
\begin{align}
0&<\sum_{j=k}^{\infty}\frac{r(r\hspace{-0.05cm}+\hspace{-0.05cm}1)\ldots (r\hspace{-0.05cm}+\hspace{-0.05cm}j\hspace{-0.05cm}-\hspace{-0.05cm}1)}{r(r\hspace{-0.05cm}+\hspace{-0.05cm}1)\ldots (r\hspace{-0.05cm}+\hspace{-0.05cm}k\hspace{-0.05cm}-\hspace{-0.05cm}1)}\frac{(k\hspace{-0.05cm}-\hspace{-0.05cm}1)!}{j!}q^{j-k}\hspace{-0.07cm}\le 1+\sum_{j=k+1}^{\infty}\frac{r(r\hspace{-0.05cm}+\hspace{-0.05cm}1)\ldots (r\hspace{-0.05cm}+\hspace{-0.05cm}j\hspace{-0.05cm}-\hspace{-0.05cm}1)}{r(r\hspace{-0.05cm}+\hspace{-0.05cm}1)\ldots (r\hspace{-0.05cm}+\hspace{-0.05cm}k\hspace{-0.05cm}-\hspace{-0.05cm}1)}\frac{(k\hspace{-0.05cm}-\hspace{-0.05cm}1)!}{j!}q^{j-k}\nonumber\\
&= 1+\sum_{j=k+1}^{\infty}\frac{(r+k)\ldots (r+j-1)}{k(k+1)\ldots j}q^{j-k}=1+\sum_{j=1}^{\infty}\frac{(r+k)\ldots (r+j+k-1)}{k(k+1)\ldots (j+k)}q^{j}\nonumber\\
&\le 1+\sum_{j=1}^{\infty}\frac{(r+k)\ldots (r+j+k-1)}{(k+1)\ldots (j+k)}q^{j}\le  \frac{p^{-r}-1}{rq}, \quad \text{(using Lemma \ref{10:po2}(ii))}.\label{10:rs4}
\end{align}
Therefore, from Lemma \ref{10:po1}(i), we have
\begin{align}
0<\sum_{j=k}^{\infty}\frac{r(r+1)\ldots (r+j-1)}{r(r+1)\ldots (r+k-1)}\frac{(k-1)!}{j!}q^{j-k}\mathbb{E} [(\text{N}_{r,p}-z)^+]\le p^{-(r+1)}-p^{-1}.\label{10:in2}
\end{align}
Hence, from \eqref{10:sol}, \eqref{10:in1}, and \eqref{10:in2}, we get
\begin{align*}
|g_z(k)|=\left|\sum_{j=k}^{\infty}\frac{r(r+1)\ldots (r+j-1)}{r(r+1)\ldots (r+k-1)}\frac{(k-1)!}{j!}q^{j-k}[(j-z)^+-\mathbb{E} [(\text{N}_{r,p}-z)^+]]\right|\le p^{-(r+1)}.
\end{align*}
This proves (i).
\item[(ii)] Note that, for $k=0$,
\begin{align*}
|\Delta g_z(0)|=|g_z(1)|\le p^{-(r+1)}\le 2p^{-(r+1)}-p^{-1}.
\end{align*}
Now, we prove the result for $k\ge 1$. Let
\begin{align*}
A_1(k)&=\sum_{j=k}^{\infty}\frac{r(r+1)\ldots (r+j-1)}{r(r+1)\ldots (r+k-1)}\frac{(k-1)!}{j!}q^{j-k}(j-z)^+\\
&~~~-\sum_{j=k+1}^{\infty}\frac{r(r+1)\ldots (r+j-1)}{r(r+1)\ldots (r+k)}\frac{k!}{j!}q^{j-k-1}(j-z)^+
\end{align*}
and
\begin{align*}
A_2(k)&=\sum_{j=k+1}^{\infty}\frac{r(r+1)\ldots (r+j-1)}{r(r+1)\ldots (r+k)}\frac{k!}{j!}q^{j-k-1}\mathbb{E} [(\text{N}_{r,p}-z)^+]\\
&~~~-\sum_{j=k}^{\infty}\frac{r(r+1)\ldots (r+j-1)}{r(r+1)\ldots (r+k-1)}\frac{(k-1)!}{j!}q^{j-k}\mathbb{E} [(\text{N}_{r,p}-z)^+].
\end{align*}
Then
\begin{align*}
\Delta g_z(k)&=g_z(k+1)-g_z(k)=A_1(k)+A_2(k),
\end{align*}
Hence, using \eqref{10:in1} and \eqref{10:in2}, we have
\begin{align*}
|\Delta g_z(k)|\le |A_1(k)|+|A_2(k)|\le 2p^{-(r+1)}-p^{-1}.
\end{align*}
This proves (ii).
\end{enumerate}
\end{proof}

\begin{lemma}\label{10:le3}
For $k \ge 1$ and $z> 1$, $g_z$ defined in \eqref{10:sol} satisfies the following:
\begin{align*}
|\Delta g_z(k)|&\le \left\{
\begin{array}{ll}
\vspace{0.2cm}
\displaystyle{\frac{1}{z}\left(2p^{-(r+1)}-p^{-1}\right) }& \text{if } k\ge z;\\
\vspace{0.2cm}
\displaystyle{\frac{1}{z}\left((1+q^{-1})p^{-(r+2)}-p^{-2}\right)} & \text{if } 2\le k<z;\\
\displaystyle{\frac{(r+1)}{z}\left(2p^{-(r+2)}-p^{-2}\right)} & \text{if }k=1.
\end{array}\right.
\end{align*}
\end{lemma}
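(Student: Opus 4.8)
\noindent\emph{Proof idea.} The plan is to solve the Stein recursion \eqref{10:sess} explicitly in each of the three ranges of $k$ and read off $\Delta g_z$; the factor $1/z$ will enter in two quite different ways, from the size of $k$ when $k\ge z$ and from a factorial--moment bound on $\mathbb{E}[(\text{N}_{r,p}-z)^+]$ when $k<z$.

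\emph{Range $k\ge z$.} Here the right--hand side of \eqref{10:sess} is $k-z-\mathbb{E}[(\text{N}_{r,p}-z)^+]$. Substituting $g_z(k)=-p^{-1}+\widetilde g(k)$ and using $\tfrac1p\bigl(q(r+k)-k\bigr)=\mathbb{E}[\text{N}_{r,p}]-k$ together with $\mathbb{E}[\text{N}_{r,p}]-\mathbb{E}[(\text{N}_{r,p}-z)^+]-z=-\mathbb{E}[(z-\text{N}_{r,p})^+]$, equation \eqref{10:sess} turns into
\begin{align*}
q(r+k)\,\widetilde g(k+1)-k\,\widetilde g(k)=-\,\mathbb{E}\bigl[(z-\text{N}_{r,p})^+\bigr],\qquad k\ge z,
\end{align*}
a Stein--type equation with a \emph{constant} right--hand side. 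Since its homogeneous solutions grow like $q^{-k}$, it has a unique bounded solution, and using $q(r+k)\mathbb{P}(\text{N}_{r,p}=k)=(k+1)\mathbb{P}(\text{N}_{r,p}=k+1)$ one checks it is $\widetilde g(k)=\mathbb{E}[(z-\text{N}_{r,p})^+]\,\phi(k)$, where $\phi(k):=\mathbb{P}(\text{N}_{r,p}\ge k)/\bigl(k\,\mathbb{P}(\text{N}_{r,p}=k)\bigr)$. Hence, for $k\ge z$,
\begin{align*}
\Delta g_z(k)=-\,\mathbb{E}\bigl[(z-\text{N}_{r,p})^+\bigr]\,\bigl(\phi(k)-\phi(k+1)\bigr).
\end{align*}
One then uses $\mathbb{E}[(z-\text{N}_{r,p})^+]\le z$ (as $\text{N}_{r,p}\ge 0$), the monotonicity $\phi(k)\ge\phi(k+1)$, and the estimate $\phi(k)-\phi(k+1)\le\bigl(2p^{-(r+1)}-p^{-1}\bigr)k^{-2}$; since $k\ge z$ this yields $|\Delta g_z(k)|\le z\cdot\bigl(2p^{-(r+1)}-p^{-1}\bigr)z^{-2}=z^{-1}\bigl(2p^{-(r+1)}-p^{-1}\bigr)$.

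\emph{Ranges $k<z$.} Here one uses the solution \eqref{10:sol} in its ``from--below'' form $g_z(m)=\bigl(m\,\mathbb{P}(\text{N}_{r,p}=m)\bigr)^{-1}\sum_{j=0}^{m-1}\mathbb{P}(\text{N}_{r,p}=j)\bigl[(j-z)^+-\mathbb{E}[(\text{N}_{r,p}-z)^+]\bigr]$. Since $(j-z)^+=0$ for every $j\le k-1<z$, this collapses to $g_z(k)=-\,\mathbb{E}[(\text{N}_{r,p}-z)^+]\,\psi(k)$ with $\psi(k):=\mathbb{P}(\text{N}_{r,p}\le k-1)/\bigl(k\,\mathbb{P}(\text{N}_{r,p}=k)\bigr)$, whence $\Delta g_z(k)=\mathbb{E}[(\text{N}_{r,p}-z)^+]\bigl(\psi(k)-\psi(k+1)\bigr)$ for $2\le k<z$. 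The factor $1/z$ now comes from $z>1$: the quadratic $j^2-(1+z)j+z^2$ has negative discriminant $1+2z-3z^2$ for $z>1$, hence is positive at every integer $j\ge 0$, so $(j-z)^+\le z^{-1}j(j-1)$ and therefore
\begin{align*}
\mathbb{E}[(\text{N}_{r,p}-z)^+]\le\frac1z\,\mathbb{E}[\text{N}_{r,p}(\text{N}_{r,p}-1)]=\frac{r(r+1)q^2}{z\,p^2}.
\end{align*}
Bounding $|\psi(k)-\psi(k+1)|$ by the negative binomial inequalities of the Appendix then gives $|\Delta g_z(k)|\le z^{-1}\bigl((1+q^{-1})p^{-(r+2)}-p^{-2}\bigr)$ for $2\le k<z$. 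For $k=1$ the same collapsed formula gives $g_z(1)=-\mathbb{E}[(\text{N}_{r,p}-z)^+]/(rq)$ and $g_z(2)=-\mathbb{E}[(\text{N}_{r,p}-z)^+](1+rq)/\bigl(r(r+1)q^2\bigr)$, so $|\Delta g_z(1)|\le|g_z(1)|+|g_z(2)|$, and combining with the second--moment bound above produces the (loose) constant $(r+1)\bigl(2p^{-(r+2)}-p^{-2}\bigr)$.

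\emph{Main obstacle.} The real work is the estimate used when $k\ge z$, namely $k^{2}\bigl(\phi(k)-\phi(k+1)\bigr)\le 2p^{-(r+1)}-p^{-1}$ for all $k$: a quantitative quasi--convexity of the tail map $\phi(k)=\mathbb{P}(\text{N}_{r,p}\ge k)/(k\,\mathbb{P}(\text{N}_{r,p}=k))$. Since $\phi(k)$ and $\phi(k+1)$ are each of order $1/k$, their difference cannot be controlled by bounding the two terms separately; one must exploit the cancellation termwise, working with the weight ratio $\mathbb{P}(\text{N}_{r,p}=k+1)/\mathbb{P}(\text{N}_{r,p}=k)=(r+k)q/(k+1)$ and the elementary estimates for partial sums of negative binomial weights gathered in the Appendix (Lemmas \ref{10:po1}--\ref{10:po2}); the analogous but cruder estimate for $|\psi(k)-\psi(k+1)|$ is also needed. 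Granting these, the three claimed inequalities follow by the arithmetic indicated above.
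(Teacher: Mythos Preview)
Your overall decomposition is elegant and genuinely different from the paper's: you recognise that for $k<z$ the from--below representation collapses $g_z(k)$ to the single product $-\mathbb{E}[(\text{N}_{r,p}-z)^+]\,\psi(k)$, and for $k\ge z$ the shift by $-p^{-1}$ reduces it to $\mathbb{E}[(z-\text{N}_{r,p})^+]\,\phi(k)$. The paper instead keeps the from--above form throughout and splits $\Delta g_z(k)=A_1(k)+A_2(k)$, treating the $(j-z)^+$ and $\mathbb{E}[(\text{N}_{r,p}-z)^+]$ contributions separately via the algebraic identity \eqref{10:ppp}. Your route for $k\ge z$ can in fact be completed: the monotonicity $\phi(k)\ge\phi(k+1)$ and a bound of the required size on $k^{2}\bigl(\phi(k)-\phi(k+1)\bigr)$ follow from a termwise comparison together with Lemma~\ref{10:po2}(ii), and the $k=1$ case also checks out as you describe.

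The genuine gap is in the range $2\le k<z$. Your plan is to extract the factor $1/z$ from the factorial--moment bound $\mathbb{E}[(\text{N}_{r,p}-z)^+]\le r(r+1)q^{2}/(zp^{2})$ and then bound $|\psi(k)-\psi(k+1)|$ by a constant depending only on $(r,p)$. But $\psi(k)=\mathbb{P}(\text{N}_{r,p}\le k-1)/\bigl(k\,\mathbb{P}(\text{N}_{r,p}=k)\bigr)$ is a \emph{head}--to--point ratio and is not uniformly bounded in $k$: once $k$ is well above the mean, $\mathbb{P}(\text{N}_{r,p}\le k-1)\to 1$ while $\mathbb{P}(\text{N}_{r,p}=k)$ decays like $q^{k}$, so $\psi(k)$ blows up; the recursion $q(r+k)\psi(k+1)-k\psi(k)=1$ then gives $\psi(k+1)-\psi(k)\approx (p/q)\,\psi(k)$, which is also unbounded. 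Taking $k=\lfloor z\rfloor$ with $z$ large, the product $\dfrac{r(r+1)q^{2}}{zp^{2}}\,|\psi(k+1)-\psi(k)|$ therefore tends to infinity, and the Appendix lemmas---which control only \emph{tail}--type sums $\sum_{j\ge k}\mathbb{P}(\text{N}_{r,p}=j)/\mathbb{P}(\text{N}_{r,p}=k)$---cannot help here. The paper avoids this precisely by staying with the from--above form: its $A_2(k)$ involves the tail ratio $\sum_{j\ge k}\mathbb{P}(\text{N}_{r,p}=j)/\bigl(k\,\mathbb{P}(\text{N}_{r,p}=k)\bigr)$, which \emph{is} uniformly bounded for $k\ge 2$ (inequality \eqref{10:b11}), so that Lemma~\ref{10:po1}(ii) legitimately supplies the factor $1/z$. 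To salvage your $\psi$--route in this range you would have to keep the two factors coupled and play the exponential decay of $\mathbb{E}[(\text{N}_{r,p}-z)^+]$ in $z$ against the exponential growth of $\psi(k)$ for $k$ near $z$; the $O(1/z)$ moment bound by itself is not enough.
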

\begin{proof}
Let $k\ge z$. First, consider
\begin{align*}
A_1(k)&=\sum_{j=k}^{\infty}\frac{r(r+1)\ldots (r+j-1)}{r(r+1)\ldots (r+k)}\frac{(k-1)!}{(j+1)!}q^{j-k}[(r+k)(j+1)(j-z)^+-k(r+j)(j+1-z)^+]\\
&=\sum_{j=k}^{\infty}\frac{r(r+1)\ldots (r+j-1)}{r(r+1)\ldots (r+k)}\frac{(k-1)!}{(j+1)!}q^{j-k}[(r+k)(j+1)(j-z)-k(r+j)(j+1-z)].
\end{align*}
Observe that
\vspace{-0.41cm}
\begin{align}
|(r+k)(j+1)(j-z)&-k(r+j)(j+1-z)|\nonumber\\
&=|r(j+1)(j-k)-r(j-k)z-(k+r)z|\nonumber\\
&\le |r(j+1)(j-k)-r(j-k)z|+(k+r)z\nonumber\\
&=r(j+1)(j-k)-(r(j-k)-k-r)z\nonumber\\
&\le \left\{
\begin{array}{ll}
(k+r)z & \text{if } j=k;\\
r(j+1)(j-k) & \text{if }j>k.
\end{array}\right.\label{10:ppp}
\end{align}
Therefore,
\vspace{-0.41cm}
\begin{align}
|A_1(k)|&\le \frac{z}{k(k+1)}+r\sum_{j=k+1}^{\infty}\frac{r(r+1)\ldots (r+j-1)}{r(r+1)\ldots (r+k)}\frac{(k-1)! (j-k)}{j!}q^{j-k}\nonumber\\
&\le \frac{z}{k(k+1)}+\frac{r}{k}\sum_{j=k+1}^{\infty}\frac{r(r+1)\ldots (r+j-1)}{r(r+1)\ldots (r+k)}\frac{k!}{(j-1)!}q^{j-k}\nonumber\\
&= \frac{z}{k(k+1)}+\frac{rq}{k}+\frac{r}{k}\sum_{j=k+2}^{\infty}\frac{r(r+1)\ldots (r+j-1)}{r(r+1)\ldots (r+k)}\frac{k!}{(j-1)!}q^{j-k}\nonumber\\
&= \frac{z}{k(k+1)}+\frac{rq}{k}+\frac{r}{k}\sum_{j=k+2}^{\infty}\frac{(r+k+1)\ldots (r+j-1)}{(k+1)\ldots (j-1)}q^{j-k}\nonumber\\
&= \frac{1}{z}\left(1+rq+r\sum_{j=2}^{\infty}\frac{(r+k+1)\ldots (r+j+k-1)}{(k+1)\ldots (j+k-1)}q^{j}\right)\nonumber\\
&\le  \frac{p^{-(r+1)}}{z}, \quad \text{(using Lemma \ref{10:po2}(iii))}.\label{10:ak1}
\end{align}
Now, consider
\begin{align*}
\sum_{j=k}^{\infty}\frac{r(r+1)\ldots (r+j-1)}{r(r+1)\ldots (r+k-1)}\frac{(k-1)!}{j!}q^{j-k}&=\frac{1}{k}\sum_{j=k}^{\infty}\frac{r(r+1)\ldots (r+j-1)}{r(r+1)\ldots (r+k-1)}\frac{k!}{j!}q^{j-k}\\
&= \frac{1}{z}\left(1+\sum_{j=k+1}^{\infty}\frac{(r+k)\ldots (r+j-1)}{(k+1)\ldots j}q^{j-k}\right)\\
&= \frac{1}{z}\left(1+\sum_{j=1}^{\infty}\frac{(r+k)\ldots (r+j+k-1)}{(k+1)\ldots (j+k)}q^{j}\right)\\
& \le \frac{p^{-r}-1}{rqz},\quad \text{(using Lemma \ref{10:po2}(ii)).}
\end{align*}
Therefore, from Lemma \ref{10:po1}(i), we have
\begin{align}
\sum_{j=k}^{\infty}\frac{r(r+1)\ldots (r+j-1)}{r(r+1)\ldots (r+k-1)}\frac{(k-1)!}{j!}q^{j-k}\mathbb{E}[(\text{N}_{r,p}-z)^+]\le \frac{p^{-(r+1)}-p^{-1}}{z}.\label{10:l1}
\end{align} 
Hence, for $k\ge z$, from \eqref{10:ak1} and \eqref{10:l1}, we have
\begin{align*}
|\Delta g(k)|\le |A_1(k)|+|A_2(k)|\le \frac{1}{z}\left(2p^{-(r+1)}-p^{-1}\right).
\end{align*}
Next, let $k<z$ and consider
\begin{align}
|A_1(k)|\hspace{-0.1cm}&\le\hspace{-0.2cm}\sum_{j=\ceil{z}-1}^{\infty}\hspace{-0.2cm}\frac{r(r+1)\ldots (r+j-1)}{r(r+1)\ldots (r+k)}\frac{(k-1)!}{(j+1)!}q^{j-k}|(r+k)(j+1)(j-z)^+-k(r+j)(j+1-z)^+|\nonumber\\
&\le\frac{r(r+1)\ldots (r+\ceil{z}-1)}{r(r+1)\ldots (r+k)}\frac{k!}{(\ceil{z})!}(\ceil{z}-z)q^{\ceil{z}-1-k}\nonumber\\
&~~~+\frac{r}{\ceil{z}}\sum_{j=\ceil{z}}^{\infty}\frac{r(r+1)\ldots (r+j-1)}{r(r+1)\ldots (r+k)}\frac{(k-1)!}{(j-2)!}q^{j-k} \quad \text{(using \eqref{10:ppp})}\label{10:p2}\\
&\le\frac{r(r+1)\ldots (r+\ceil{z}-1)}{r(r+1)\ldots (r+k)}\frac{k!}{(\ceil{z})!}(\ceil{z}-z)q^{\ceil{z}-1-k}\nonumber\\
&~~~+\frac{r}{z}\frac{r(r+1)\ldots (r+\ceil{z}-1)}{r(r+1)\ldots (r+k)}\frac{(k-1)!}{(\ceil{z}-2)!}q^{\ceil{z}-k}\nonumber\\
&~~~+\frac{r}{z}\sum_{j=\ceil{z}+1}^{\infty}\frac{(r+k+1)\ldots (r+j-1)}{k(k+1)\ldots (j-2)}q^{j-k},\nonumber
\end{align}
\vspace{-0.001cm}
where $\ceil{z}$ is the smallest integer greater than or equal to $z$. At $k=\ceil{z}-1$, we have
\begin{align}
|A_1(k)|&\le \frac{1}{z}\left(1+rq+r\sum_{j=\ceil{z}+1}^{\infty}\frac{(r+\ceil{z})\ldots (r+j-1)}{(\ceil{z}-1)(\ceil{z})\ldots (j-2)}q^{j-\ceil{z}+1}\right)\nonumber\\
&=\frac{1}{z}\left(1+rq+r\sum_{j=\ceil{z}+1-k}^{\infty}\frac{(r+\ceil{z})\ldots (r+j+k-1)}{(\ceil{z}-1)(\ceil{z})\ldots (j+k-2)}q^{j+k-\ceil{z}+1}\right)\nonumber\\
&=\frac{1}{z}\left(1+rq+r\sum_{j=2}^{\infty}\frac{(r+k+1)\ldots (r+j+k-1)}{k(k+1)\ldots (j+k-2)}q^{j}\right)\nonumber\\
&=\frac{1}{z}\left(1+rq+rq\sum_{j=1}^{\infty}\frac{(r+k+1)\ldots (r+j+k)}{k(k+1)\ldots (j+k-1)}q^{j}\right)\label{10:p1}\\
&\le \frac{p^{-(r+2)}}{z}\quad \text{(using Lemma \ref{10:po2}(iv)).}\label{10:p8}
\end{align}
Now, let $k <\ceil{z}-1$. From \eqref{10:p2}, we have
\begin{align}
|A_1(k)|&\le \frac{r(r+1)\ldots (r+\ceil{z}-1)}{r(r+1)\ldots (r+k)}\frac{k!}{(\ceil{z})!}(\ceil{z}-z)q^{\ceil{z}-1-k}\nonumber\\
&~~~+\frac{r}{\ceil{z}}\sum_{j=\ceil{z}}^{\infty}\frac{(r+k+1)\ldots (r+j-1)}{k(k+1)\ldots (j-2)}q^{j-k}\nonumber \\
&\le \frac{1}{z}\frac{(r+k+1)\ldots (r+\ceil{z}-1)}{k(k+1)\ldots (\ceil{z}-2)}(\ceil{z}-z)q^{\ceil{z}-1-k}\nonumber\\
&~~~+\frac{r+1}{z}\sum_{j=\ceil{z}}^{\infty}\frac{(r+k+1)\ldots (r+j)}{k(k+1)\ldots (j-1)}q^{j-k} \nonumber\\
&\le \frac{r+1}{z}\sum_{j=\ceil{z}-1}^{\infty}\frac{(r+k+1)\ldots (r+j)}{k(k+1)\ldots (j-1)}q^{j-k}\nonumber\\
&\le \frac{r+1}{z}\sum_{j=k+1}^{\infty}\frac{(r+k+1)\ldots (r+j)}{k(k+1)\ldots (j-1)}q^{j-k}\nonumber\\
&\le \frac{r+1}{z}\sum_{j=1}^{\infty}\frac{(r+k+1)\ldots (r+j+k)}{k(k+1)\ldots (j+k-1)}q^{j}\label{10:p3}\\
&\le \frac{p^{-(r+2)}}{qz},\quad \text{(using Lemma \ref{10:po2}(iv)).}\label{10:p14}
\end{align}
Next, for $k\ge 2$, consider
\begin{align}
\sum_{j=k}^{\infty}\frac{r(r+1)\ldots (r+j-1)}{r(r+1)\ldots (r+k-1)}\frac{(k-1)!}{j!}q^{j-k}&=\frac{1}{k}+\sum_{j=k+1}^{\infty}\frac{(r+k)\ldots (r+j-1)}{k(k+1)\ldots j}q^{j-k}\nonumber\\
&\le \frac{1}{2}+\sum_{j=1}^{\infty}\frac{(r+k)\ldots (r+j+k-1)}{k(k+1)\ldots (j+k)}q^{j}\nonumber\\
&\le \frac{p^{-r}-1}{r(r+1)q^2},\quad \text{(using Lemma \ref{10:po2}(v))}.\label{10:b11}
\end{align}
Therefore, from Lemma \ref{10:po1}(ii) and \eqref{10:b11}, we get
\begin{align}
\sum_{j=k}^{\infty}\frac{r(r+1)\ldots (r+j-1)}{r(r+1)\ldots (r+k-1)}\frac{(k-1)!}{j!}q^{j-k}\mathbb{E}[(\text{N}_{r,p}-z)^+]\le \frac{p^{-(r+2)}-p^{-2}}{z}.\label{10:b12}
\end{align}
Hence, for $k<z$, from \eqref{10:p8}, \eqref{10:p14}, and \eqref{10:b12}, we have
\begin{align*}
|\Delta g(k)|\le |A_1(k)|+|A_2(k)|\le \frac{1}{z}\left((1+q^{-1})p^{-(r+2)}-p^{-2}\right).
\end{align*}
Next, at $k=1$, from \eqref{10:p1}, we have
\begin{align}
|A_1(1)|\le \frac{1}{z}\left(1+rq+rq\sum_{j=1}^{\infty}\binom{r+j+1}{j}q^j\right)\le \frac{1}{z}\left(1+rqp^{-(r+2)}\right)\le \frac{(r+1)p^{-(r+2)}}{z}\label{10:p21}
\end{align}
and, at $k=1$, from \eqref{10:p3}, we have
\begin{align}
|A_1(1)|\le \frac{r+1}{z}\sum_{j=1}^{\infty}\binom{r+j+1}{j}q^j=\frac{(r+1)\left(p^{-(r+2)}-1\right)}{z}\le \frac{(r+1)p^{-(r+2)}}{z}.\label{10:p22}
\end{align}
Also, using Lemma \ref{10:po1}(ii), it can be easily verified that
\begin{align}
|A_2(1)|\le \frac{1}{r}\sum_{j=1}^{\infty}\frac{r(r+1)\ldots (r+j-1)}{j!}q^{j-1}\mathbb{E}[(\text{N}_{r,p}-z)^+]\le \frac{(r+1)\left(p^{-(r+2)}-p^{-2}\right)}{z}.\label{10:p23}
\end{align}
Hence, at $k=1$, from \eqref{10:p21}, \eqref{10:p22}, and \eqref{10:p23}, we have
\begin{align*}
|\Delta g(1)|\le |A_1(1)|+|A_2(1)|\le \frac{(r+1)}{z}\left(2p^{-(r+2)}-p^{-2}\right).
\end{align*}
This proves the result.
\end{proof}

\begin{remark} \label{10:re1}
From Lemma \ref{10:le3}, a rather crude uniform bound is given by
\begin{align}
\|\Delta g_z\|\le \vartheta_{r,p,z}:=\displaystyle{\frac{r+1}{z}\left((1+q^{-1})p^{-(r+2)}-p^{-2}\right)} ,~\text{for }k\ge 1~\text{and }z>1.\label{10:kk8}
\end{align}
\end{remark}

\section{Bounds for NB Approximation}\label{10:sec3}
In this section, we obtain error bounds between $\mathbb{E}[(\text{N}_{r,p}-z)^+]$ and $\mathbb{E}[(V-z)^+]$ such that $\text{N}_{r,p}$ follows NB distribution and $V=\sum_{i\in J}\zeta_i$, where $\{\zeta_i\}_{i\in J}$ is a collection of $\mathbb{Z}_+$-valued rvs. Throughout this section, let $\mu_X$ and $\sigma_X$ denote the mean and variance for the rv $X$. The following theorem gives the bound for the locally dependent setup. 

\begin{theorem}\label{10:th1}
Let $\mathbb{E}(\zeta_i^3)<\infty$ and $V$ be the sum of locally dependent rvs as defined in \eqref{10:v}. Then
\begin{enumerate}
\item (uniform bound) \quad\quad \quad  $\sup_{z\ge 0}|\mathbb{E}[\mathscr{A}g_z(V)]|\le \left(2p^{-(r+1)}-p^{-1}\right)U_J$
\item (non-uniform bound) \quad $|\mathbb{E}[\mathscr{A}g_z(V)]|\le \vartheta_{r,p,z}U_J$, for all $z>1$,
\end{enumerate}
where 
\begin{align*}
U_J=\left\{
\begin{array}{ll}
\vspace{0.2cm}
\displaystyle{\sum_{i\in J}[p\mathbb{E}(\zeta_i)\mathbb{E}(\zeta_{A_i})+q\mathbb{E}(\zeta_i \zeta_{A_i})+\mathbb{E}(\zeta_i(\zeta_{A_i}-1))]} & \text{if } \mu_{\text{N}_{r,p}}=\mu_V;\\
\vspace{0.07cm}
\displaystyle{p\sum_{i\in J}\mathbb{E}(\zeta_i)\mathbb{E}[\zeta_{A_i}(2\zeta_{B_i}-\zeta_{A_i}-1)\mathscr{D}(V|\zeta_{A_i},\zeta_{B_i})]} & \text{if } \mu_{\text{N}_{r,p}}=\mu_V~\text{and }\\
\vspace{0.07cm}
+\displaystyle{q\sum_{i\in J}\mathbb{E}[\zeta_i\zeta_{A_i}(2\zeta_{B_i}-\zeta_{A_i}-1)\mathscr{D}(V|\zeta_i,\zeta_{A_i},\zeta_{B_i})]} &~~~\sigma_{\text{N}_{r,p}}=\sigma_V.\\
\vspace{0.07cm}
+\displaystyle{\sum_{i \in J}|p\mathbb{E}(\zeta_i)\mathbb{E}(\zeta_{A_i})+q\mathbb{E}(\zeta_i\zeta_{A_i})-\mathbb{E}(\zeta_i(\zeta_{A_i}-1))|\mathbb{E}[\zeta_{B_i}\mathscr{D}(V|\zeta_{B_i})]}\\
\vspace{0.07cm}
+\displaystyle{\sum_{i \in J}\mathbb{E}[\zeta_i(\zeta_{A_i}-1)(2\zeta_{B_i}-\zeta_{A_i}-2)\mathscr{D}(V|\zeta_i,\zeta_{A_i},\zeta_{B_i})]}
 \end{array}\right.
\end{align*} 
and $\vartheta_{r,p,z}$ is defined in \eqref{10:kk8}.
\end{theorem}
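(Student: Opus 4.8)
The plan is to reduce the problem, via \eqref{10:mb}, to bounding $|\mathbb{E}[\mathscr{A}g_z(V)]|$, and then to produce the two forms of $U_J$ by a first- and a second-order local expansion of the Stein solution. Expanding \eqref{10:so} and writing $V=\sum_{i\in J}\zeta_i$,
\begin{align*}
\mathbb{E}[\mathscr{A}g_z(V)]=qr\,\mathbb{E}[g_z(V+1)]+q\sum_{i\in J}\mathbb{E}[\zeta_i g_z(V+1)]-\sum_{i\in J}\mathbb{E}[\zeta_i g_z(V)].
\end{align*}
Since $\mu_{\text{N}_{r,p}}=rq/p$, the hypothesis $\mu_{\text{N}_{r,p}}=\mu_V$ gives $qr=p\sum_{i\in J}\mathbb{E}(\zeta_i)$, so the first term is absorbed into the sum and $\mathbb{E}[\mathscr{A}g_z(V)]=\sum_{i\in J}\big(p\mathbb{E}(\zeta_i)\mathbb{E}[g_z(V+1)]+q\mathbb{E}[\zeta_i g_z(V+1)]-\mathbb{E}[\zeta_i g_z(V)]\big)$.

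For the first form of $U_J$ (hence parts (i) and (ii) when only $\mu_{\text{N}_{r,p}}=\mu_V$), fix $i$ and set $W_i:=V-\zeta_{A_i}=\zeta_{A_i^c}$, which is independent of $\zeta_i$ by hypothesis. Since $V+1=W_i+\zeta_{A_i}+1$, and on $\{\zeta_i\ge1\}$ one has $V=W_i+\zeta_{A_i}$ with $\zeta_{A_i}\ge1$, telescoping $g_z$ about $W_i+1$ gives $g_z(V+1)=g_z(W_i+1)+\sum_{l=1}^{\zeta_{A_i}}\Delta g_z(W_i+l)$ and $g_z(V)=g_z(W_i+1)+\sum_{l=1}^{\zeta_{A_i}-1}\Delta g_z(W_i+l)$. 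Substituting, and factoring $\mathbb{E}[\zeta_i g_z(W_i+1)]=\mathbb{E}(\zeta_i)\mathbb{E}[g_z(W_i+1)]$ by independence, the three copies of $\mathbb{E}(\zeta_i)\mathbb{E}[g_z(W_i+1)]$ enter with total coefficient $p+q-1=0$ and cancel, so
\begin{align*}
\mathbb{E}[\mathscr{A}g_z(V)]=\sum_{i\in J}\Big(p\mathbb{E}(\zeta_i)\mathbb{E}\big[{\textstyle\sum_{l=1}^{\zeta_{A_i}}}\Delta g_z(W_i+l)\big]+q\mathbb{E}\big[\zeta_i{\textstyle\sum_{l=1}^{\zeta_{A_i}}}\Delta g_z(W_i+l)\big]-\mathbb{E}\big[\zeta_i{\textstyle\sum_{l=1}^{\zeta_{A_i}-1}}\Delta g_z(W_i+l)\big]\Big).
\end{align*}
Every argument $W_i+l$ is $\ge1$, so each inner sum is at most $\zeta_{A_i}\|\Delta g_z\|$ (respectively $(\zeta_{A_i}-1)\|\Delta g_z\|$) in absolute value, and the triangle inequality yields $|\mathbb{E}[\mathscr{A}g_z(V)]|\le\|\Delta g_z\|\,U_J$ with $U_J$ the first expression in the statement. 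Then $\|\Delta g_z\|\le2p^{-(r+1)}-p^{-1}$ from Lemma \ref{10:le2}(ii) gives (i), and, for $z>1$, $\|\Delta g_z\|\le\vartheta_{r,p,z}$ from Remark \ref{10:re1} gives (ii).

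When also $\sigma_{\text{N}_{r,p}}=\sigma_V$, the plan is to expand one order further inside the three sums above; the third-moment assumption guarantees the degree-three weights below have finite expectation. Using the second independence hypothesis, $\zeta_{A_i}$ being independent of $\zeta_{B_i^c}$, write $W_i+l=(V-\zeta_{B_i})+\zeta_{B_i\setminus A_i}+l$ and telescope each $\Delta g_z(W_i+l)$ about $V-\zeta_{B_i}$, producing a first difference of $g_z$ near $V-\zeta_{B_i}$ together with a remainder built from second differences $\Delta^2 g_z$. Summed over $i$ and after replacing the arguments of the first-difference terms by $V$, their leading part is $\big(\sum_{i\in J}(p\mathbb{E}(\zeta_i)\mathbb{E}(\zeta_{A_i})+q\mathbb{E}(\zeta_i\zeta_{A_i})-\mathbb{E}(\zeta_i(\zeta_{A_i}-1)))\big)\mathbb{E}[\Delta g_z(V)]$, whose coefficient equals $\mu_V-p\sigma_V^2=0$ because the two moment conditions force $\mu_V=\mu_{\text{N}_{r,p}}=p\sigma_{\text{N}_{r,p}}^2=p\sigma_V^2$. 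The replacement errors, together with the $\Delta^2 g_z$-remainder, are controlled by $|\mathbb{E}[\Delta g_z(X)]-\mathbb{E}[\Delta g_z(Y)]|\le\|\Delta g_z\|\,d_{TV}(X,Y)$: conditioning on $\zeta_{B_i}$, on $(\zeta_{A_i},\zeta_{B_i})$, or on $(\zeta_i,\zeta_{A_i},\zeta_{B_i})$ according to which variables sit inside the expectation converts the index shifts into the conditional factors $\mathscr{D}(V|\zeta_{B_i})$, $\mathscr{D}(V|\zeta_{A_i},\zeta_{B_i})$ and $\mathscr{D}(V|\zeta_i,\zeta_{A_i},\zeta_{B_i})$, and the number-of-terms bookkeeping produces the weights $\zeta_{B_i}$, $\zeta_{A_i}(2\zeta_{B_i}-\zeta_{A_i}-1)$ and $\zeta_i(\zeta_{A_i}-1)(2\zeta_{B_i}-\zeta_{A_i}-2)$. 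Collecting the four groups gives $|\mathbb{E}[\mathscr{A}g_z(V)]|\le\|\Delta g_z\|\,U_J$ with the second expression, and the same two norm bounds again give (i) and (ii).

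The first-order telescoping is routine; the main obstacle is the second-order step, where one has to track exactly which $\sigma$-field ($\sigma(\zeta_{A_i})$ versus $\sigma(\zeta_{B_i})$) each conditional-independence factorization requires, carry the precise $p$ and $q$ weights and the $\pm1$ index shifts so that the surviving terms reassemble into the stated combinatorial factors, and verify throughout that every argument of $g_z$, $\Delta g_z$ and $\Delta^2 g_z$ stays in the range $k\ge1$ where Lemma \ref{10:le2}, Lemma \ref{10:le3} and Remark \ref{10:re1} apply.
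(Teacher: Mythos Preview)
Your proposal follows essentially the same route as the paper: rewrite $\mathbb{E}[\mathscr{A}g_z(V)]$ using $\mu_{\text{N}_{r,p}}=\mu_V$, telescope about $W_i=V-\zeta_{A_i}$ to reach the first expression for $U_J$ (the paper's display \eqref{10:m2}), and for the two-moment case telescope a second time about $V-\zeta_{B_i}$, cancel the leading $\Delta g_z$-term via the identity $\sum_i\big(p\mathbb{E}(\zeta_i)\mathbb{E}(\zeta_{A_i})+q\mathbb{E}(\zeta_i\zeta_{A_i})-\mathbb{E}(\zeta_i(\zeta_{A_i}-1))\big)=\mu_V-p\sigma_V^2=0$, and bound the remaining conditional second differences by $\|\Delta g_z\|\,\mathscr{D}(V|\cdot)$. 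The only differences are cosmetic: the paper subtracts the zero identity \eqref{10:m3} explicitly before telescoping to $\Delta^2 g_z$ (your ``replacing the arguments by $V$'' is exactly this step), and the paper writes out the double-sum bookkeeping $\sum_{j=1}^{\zeta_{A_i}}\sum_{\ell=1}^{\zeta_{B_i\setminus A_i}+j-1}$ that produces the weights $\zeta_{A_i}(2\zeta_{B_i}-\zeta_{A_i}-1)$, etc., which you leave implicit.
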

\begin{proof}
Consider the Stein operator given in \eqref{10:so} and taking expectation with respect to $V$, we get
\vspace{-0.3cm}
\begin{align}
\mathbb{E}[\mathscr{A}g_z(V)]&=rq\mathbb{E}[g_z(V+1)]+q\mathbb{E}[Vg_z(V+1)]-\mathbb{E}[Vg_z(V)]\nonumber\\
&=p\sum_{i\in J}\mathbb{E}(\zeta_i)\mathbb{E}[g_z(V+1)]+q\sum_{i\in J}\mathbb{E}[\zeta_i g_z(V+1)]-\sum_{i\in J}\mathbb{E}[\zeta_i g_z(V)],\label{10:m0}
\end{align}
where the last expression is obtained by using $\mu_{\text{N}_{r,p}}=\mu_V$. Now, let $V_i=V-\zeta_{A_i}$ then $\zeta_i$ and $V_i$ are independent rvs. Also, note that
\vspace{-0.3cm}
\begin{align}
p\sum_{i\in J}\mathbb{E}(\zeta_i)\mathbb{E}[g_z(V_i+1)]+q\sum_{i\in J}\mathbb{E}[\zeta_i g_z(V_i+1)]-\sum_{i\in J}\mathbb{E}[\zeta_i g_z(V_i+1)]=0.\label{10:m1}
\end{align}
Using \eqref{10:m1} in \eqref{10:m0}, we get
\vspace{-0.3cm}
\begin{align}
\mathbb{E}[\mathscr{A}g_z(V)]&=p\sum_{i\in J}\mathbb{E}(\zeta_i)\mathbb{E}[g_z(V+1)-g_z(V_i+1)]+q\sum_{i\in J}\mathbb{E}[\zeta_i (g_z(V+1)-g_z(V_i+1))]\nonumber\\
&~~~-\sum_{i\in J}\mathbb{E}[\zeta_i( g_z(V)-g_z(V_i+1))]\nonumber\\
&=p\sum_{i\in J}\mathbb{E}(\zeta_i)\mathbb{E}\left[\sum_{j=1}^{\zeta_{A_i}}\Delta g_z(V_i+j)\right]+q\sum_{i\in J}\mathbb{E}\left[\zeta_i \sum_{j=1}^{\zeta_{A_i}}\Delta g_z(V_i+j)\right]\nonumber\\
&~~~-\sum_{i\in J}\mathbb{E}\left[\zeta_i\sum_{j=1}^{\zeta_{A_i}-1}\Delta g_z(V_i+j)\right].\label{10:m2}
\end{align}
Therefore,
\begin{align*}
|\mathbb{E}[\mathscr{A}g_z(V)]|\le \|\Delta g_z\|\sum_{i\in J}[p\mathbb{E}(\zeta_i)\mathbb{E}(\zeta_{A_i})+q\mathbb{E}(\zeta_i \zeta_{A_i})+\mathbb{E}(\zeta_i(\zeta_{A_i}-1))].
\end{align*}
Hence, using Lemma \ref{10:le2}(ii) and \eqref{10:kk8}, the result follows when $\mu_{\text{N}_{r,p}}=\mu_V$.\\
Next, using $\mu_{\text{N}_{r,p}}=\mu_V$ and $\sigma_{\text{N}_{r,p}}=\sigma_V$, it can be easily verified that
\begin{align}
\left[p\sum_{i\in J}\mathbb{E}(\zeta_i)\mathbb{E}\left[\zeta_{A_i}\right]+q\sum_{i\in J}\mathbb{E}\left[\zeta_i \zeta_{A_i}\right]-\sum_{i\in J}\mathbb{E}\left[\zeta_i(\zeta_{A_i}-1)\right]\right]\mathbb{E}[g_z(V+1)]=0.\label{10:m3}
\end{align}
Let $V_i^*=V-\zeta_{B_i}$ then $\zeta_i$ and $\zeta_{A_i}$ are independent of $V_i^*$. Now, using \eqref{10:m3} in \eqref{10:m2}, we get
\begin{align}
\mathbb{E}[\mathscr{A}g_z(V)]&=p\sum_{i\in J}\mathbb{E}(\zeta_i)\mathbb{E}\left[\sum_{j=1}^{\zeta_{A_i}}(\Delta g_z(V_i+j)-\Delta g_z(V_i^*+1))\right]\nonumber\\
&~~~+q\sum_{i\in J}\mathbb{E}\left[\zeta_i \sum_{j=1}^{\zeta_{A_i}}(\Delta g_z(V_i+j)-\Delta g_z(V_i^*+1))\right]\nonumber\\
&~~~-\sum_{i\in J}\mathbb{E}\left[\zeta_i\sum_{j=1}^{\zeta_{A_i}-1}(\Delta g_z(V_i+j)-\Delta g_z(V_i^*+1))\right]\nonumber\\
&~~~-\sum_{i\in J}[p\mathbb{E}(\zeta_i)\mathbb{E}(\zeta_{A_i})+q\mathbb{E}(\zeta_i \zeta_{A_i})-\mathbb{E}(\zeta_i(\zeta_{A_i}-1))]\mathbb{E}[g_z(V+1)-g_z(V_i^*+1)]\nonumber\\
&=p\sum_{i\in J}\mathbb{E}(\zeta_i)\mathbb{E}\left[\sum_{j=1}^{\zeta_{A_i}}\sum_{\ell=1}^{\zeta_{B_i\backslash A_i+j-1}}\Delta^2 g_z(V_i+\ell)\right]+q\sum_{i\in J}\mathbb{E}\left[\zeta_i \sum_{j=1}^{\zeta_{A_i}}\sum_{\ell=1}^{\zeta_{B_i\backslash A_i+j-1}}\Delta^2 g_z(V_i+\ell)\right]\nonumber\\
&~~~-\sum_{i\in J}\mathbb{E}\left[\zeta_i\sum_{j=1}^{\zeta_{A_i}-1}\sum_{\ell=1}^{\zeta_{B_i\backslash A_i+j-1}}\Delta^2 g_z(V_i+\ell)\right]\nonumber\\
&~~~-\sum_{i\in J}[p\mathbb{E}(\zeta_i)\mathbb{E}(\zeta_{A_i})+q\mathbb{E}(\zeta_i \zeta_{A_i})-\mathbb{E}(\zeta_i(\zeta_{A_i}-1))]\mathbb{E}\left[\sum_{\ell=1}^{\zeta_{B_i}}\Delta^2 g_z(V_i+\ell)\right]\nonumber\\
&=p\sum_{i\in J}\mathbb{E}(\zeta_i)\mathbb{E}\left[\sum_{j=1}^{\zeta_{A_i}}\sum_{\ell=1}^{\zeta_{B_i\backslash A_i+j-1}}\mathbb{E}[\Delta^2 g_z(V_i+\ell)|\zeta_{A_i},\zeta_{B_i}]\right]\nonumber\\
&~~~+q\sum_{i\in J}\mathbb{E}\left[\zeta_i \sum_{j=1}^{\zeta_{A_i}}\sum_{\ell=1}^{\zeta_{B_i\backslash A_i+j-1}}\mathbb{E}[\Delta^2 g_z(V_i+\ell)|\zeta_i,\zeta_{A_i},\zeta_{B_i}]\right]\nonumber\\
&~~~-\sum_{i\in J}\mathbb{E}\left[\zeta_i\sum_{j=1}^{\zeta_{A_i}-1}\sum_{\ell=1}^{\zeta_{B_i\backslash A_i+j-1}}\mathbb{E}[\Delta^2 g_z(V_i+\ell)|\zeta_i,\zeta_{A_i},\zeta_{B_i}]\right]\nonumber\\
&~~~-\sum_{i\in J}[p\mathbb{E}(\zeta_i)\mathbb{E}(\zeta_{A_i})+q\mathbb{E}(\zeta_i \zeta_{A_i})-\mathbb{E}(\zeta_i(\zeta_{A_i}-1))]\mathbb{E}\left[\sum_{\ell=1}^{\zeta_{B_i}}\mathbb{E}[\Delta^2 g_z(V_i+\ell)|\zeta_{B_i}]\right].\label{10:m4}
\end{align}
Therefore,
\begin{align*}
|\mathbb{E}[\mathscr{A}g_z(V)]|&\le \|\Delta g_z\|\left\{p\sum_{i\in J}\mathbb{E}(\zeta_i)\mathbb{E}[\zeta_{A_i}(2\zeta_{B_i}-\zeta_{A_i}-1)\mathscr{D}(V|\zeta_{A_i},\zeta_{B_i})]\right.\\
&~~~+q\sum_{i\in J}\mathbb{E}[\zeta_i\zeta_{A_i}(2\zeta_{B_i}-\zeta_{A_i}-1)\mathscr{D}(V|\zeta_i,\zeta_{A_i},\zeta_{B_i})]\\
&~~~+\sum_{i \in J}|p\mathbb{E}(\zeta_i)\mathbb{E}(\zeta_{A_i})+q\mathbb{E}(\zeta_i\zeta_{A_i})-\mathbb{E}(\zeta_i(\zeta_{A_i}-1))|\mathbb{E}[\zeta_{B_i}\mathscr{D}(V|\zeta_{B_i})]\\
&~~~\left.+\sum_{i \in J}\mathbb{E}[\zeta_i(\zeta_{A_i}-1)(2\zeta_{B_i}-\zeta_{A_i}-2)\mathscr{D}(V|\zeta_i,\zeta_{A_i},\zeta_{B_i})]\right\}.
\end{align*}
Hence, using Lemma \ref{10:le2}(ii) and \eqref{10:kk8}, the result follows when $\mu_{\text{N}_{r,p}}=\mu_V$ and $\sigma_{\text{N}_{r,p}}=\sigma_V$.
\end{proof}

\begin{corollary}\label{10:cor2}
Let $V_1=\sum_{i\in J}\zeta_i$ with $p_i=\mathbb{P}(\zeta_i=1)$ and $p_{i,j}=\mathbb{P}(\zeta_i=1,\zeta_j=1)$. Then, for $\mu_{\text{N}_{r,p}}=\mu_{V_1}$, we have
\begin{align}
\sup_{z\ge 0}|\mathbb{E}[\mathscr{A}g_z(V_1)]|\le  \left(2p^{-(r+1)}-p^{-1}\right)\sum_{i\in J}\left[(1+q)\sum_{j\in A_i}p_{i,j}+p_i\left(p\sum_{j\in A_i}p_j-1\right)\right].\label{10:ak15}
\end{align} 
\end{corollary}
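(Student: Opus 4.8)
The plan is to specialize Theorem \ref{10:th1}(1) to the indicator-variable setting where each $\zeta_i$ is Bernoulli-like, i.e.\ $\mathbb{P}(\zeta_i=1)=p_i$ and $\mathbb{P}(\zeta_i=0)=1-p_i$ (so $\zeta_i$ takes only values $0$ and $1$), and then rewrite each of the three expectations appearing in $U_J$ in terms of the quantities $p_i$ and $p_{i,j}$. Concretely, since $\zeta_i^2=\zeta_i$ for a $0$--$1$ valued rv, and $\zeta_{A_i}=\sum_{j\in A_i}\zeta_j$, the first term gives $\mathbb{E}(\zeta_i)=p_i$ and $\mathbb{E}(\zeta_{A_i})=\sum_{j\in A_i}p_j$, so $p\,\mathbb{E}(\zeta_i)\mathbb{E}(\zeta_{A_i})=p\,p_i\sum_{j\in A_i}p_j$. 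For the second term, $\mathbb{E}(\zeta_i\zeta_{A_i})=\sum_{j\in A_i}\mathbb{E}(\zeta_i\zeta_j)=\sum_{j\in A_i}p_{i,j}$, using $\mathbb{E}(\zeta_i\zeta_j)=\mathbb{P}(\zeta_i=1,\zeta_j=1)=p_{i,j}$ (and noting that when $j=i$, $\mathbb{E}(\zeta_i^2)=p_i=p_{i,i}$, consistent with the convention). Thus $q\,\mathbb{E}(\zeta_i\zeta_{A_i})=q\sum_{j\in A_i}p_{i,j}$. For the third term, $\mathbb{E}(\zeta_i(\zeta_{A_i}-1))=\mathbb{E}(\zeta_i\zeta_{A_i})-\mathbb{E}(\zeta_i)=\sum_{j\in A_i}p_{i,j}-p_i$.

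Next I would add these three pieces together. Summing over $i\in J$, the coefficient of $U_{J}$-type summand becomes
\[
p\,p_i\sum_{j\in A_i}p_j \;+\; q\sum_{j\in A_i}p_{i,j}\;+\;\Bigl(\sum_{j\in A_i}p_{i,j}-p_i\Bigr)
\;=\;(1+q)\sum_{j\in A_i}p_{i,j}\;+\;p_i\Bigl(p\sum_{j\in A_i}p_j-1\Bigr),
\]
which is exactly the bracketed expression in \eqref{10:ak15}. Plugging this into the uniform bound $\sup_{z\ge 0}|\mathbb{E}[\mathscr{A}g_z(V_1)]|\le \bigl(2p^{-(r+1)}-p^{-1}\bigr)U_J$ from Theorem \ref{10:th1}(1) then yields the claimed inequality. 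The only genuine content beyond Theorem \ref{10:th1} is the elementary moment-identity bookkeeping using $\zeta_i^2=\zeta_i$; there is no real obstacle here, only the need to be careful about whether the index $j=i$ is included in $A_i$ (it is, since $i\in A_i$) and to make sure the convention $p_{i,i}=p_i$ is used consistently so that the $\mathbb{E}(\zeta_i\zeta_{A_i})$ computation goes through cleanly.

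One small point worth stating explicitly in the write-up: Theorem \ref{10:th1} requires $\mathbb{E}(\zeta_i^3)<\infty$, which is automatic here since each $\zeta_i$ is bounded, and it requires the mean-matching condition $\mu_{\text{N}_{r,p}}=\mu_{V_1}$, which is precisely the hypothesis of the corollary; so the first branch of $U_J$ applies verbatim. If one wanted to be fully rigorous about the ``Bernoulli'' interpretation, one could simply note that the corollary is stated for a collection $\{\zeta_i\}$ with $p_i=\mathbb{P}(\zeta_i=1)$ and $p_{i,j}=\mathbb{P}(\zeta_i=1,\zeta_j=1)$, which signals the $0$--$1$ valued setting, and all the identities above hold in that case. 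I expect the proof to be only a few lines: invoke Theorem \ref{10:th1}(1), substitute the three moment identities, and collect terms.
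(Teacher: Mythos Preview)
Your proposal is correct and is exactly the intended derivation: the paper states Corollary~\ref{10:cor2} immediately after Theorem~\ref{10:th1} without a separate proof, so the implicit argument is precisely to specialize the first branch of $U_J$ to $0$--$1$ valued $\zeta_i$ and compute the three expectations as you did. Your bookkeeping (including the convention $p_{i,i}=p_i$ coming from $\zeta_i^2=\zeta_i$ and $i\in A_i$) is correct and yields the bracketed expression in \eqref{10:ak15} verbatim.
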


\begin{remarks}
\begin{enumerate}
\item[(i)] In Theorem \ref{10:th1}, note that we have the flexibility to choose one parameter (either $r$ or $p$) of our choice when $\mu_{\text{N}_{r,p}}=\mu_V$. Also, the bound is valid only if $\mathbb{E}(V)<\mathrm{Var}(V)$ when $\mu_{\text{N}_{r,p}}=\mu_V$ and $\sigma_{\text{N}_{r,p}}=\sigma_V$. 
\item[(ii)] Observe that $V$ can be expressed as a conditional sum of independent rvs and hence, Subsections 5.3 and 5.4 of R\"{o}llin \cite{RO2008} can be used to obtain the bound of $\mathscr{D}(V|\cdot)$. For more details, see Remarks 3.1(ii) of Kumar {et al.} \cite{KUV}.
\end{enumerate}
\end{remarks}

\noindent
Next, the following theorem gives the bound for independent setup.

\begin{theorem}\label{10:th2}
Let $\mathbb{E}(\zeta_i^3)<\infty$ and $V$ be the sum of independent rvs. Then
\begin{enumerate}
\item (uniform bound) \quad\quad \quad  $\sup_{z\ge 0}|\mathbb{E}[\mathscr{A}g_z(V)]|\le \left(2p^{-(r+1)}-p^{-1}\right)U_J^*$
\item (non-uniform bound) \quad $|\mathbb{E}[\mathscr{A}g_z(V)]|\le \vartheta_{r,p,z}U_J^*$, for all $z>1$,
\end{enumerate}
where 
\begin{align*}
U_J^*=\left\{
\begin{array}{ll}
\vspace{0.3cm}
\displaystyle{\sum_{i\in J}\sum_{k=1}^{\infty}k|(p\mathbb{E}(\zeta_i)+qk)\gamma_{i,k}-(k+1)\gamma_{i,k+1}|} & \text{if } \mu_{\text{N}_{r,p}}=\mu_V;\\
\vspace{0.1cm}
\displaystyle{\sqrt{\frac{2}{\pi}}\left(\frac{1}{4}\hspace{-0.07cm}+\hspace{-0.07cm}\sum_{j\in J}\delta_j\hspace{-0.07cm}-\hspace{-0.07cm}\delta^*\right)^{-\frac{1}{2}}\hspace{-0.1cm}\left\{\sum_{i\in J}\mathbb{E}(\zeta_i)|p\mathbb{E}(\zeta_i)^2+q\mathbb{E}(\zeta_i^2)-\mathbb{E}(\zeta_i(\zeta_{i}-1))|\right.}& \text{if } \mu_{\text{N}_{r,p}}=\mu_V\\
\displaystyle{\left.+\sum_{i\in J}\sum_{k=2}^{\infty}\frac{k(k-1)}{2}|(p\mathbb{E}(\zeta_i)+qk)\gamma_{i,k}-(k+1)\gamma_{i,k+1}|\right\}} & \text{and }\sigma_{\text{N}_{r,p}}=\sigma_V,
 \end{array}\right.
\end{align*} 
$\vartheta_{r,p,z}$ is defined in \eqref{10:kk8}, $\gamma_{i,k}=\mathbb{P}(\zeta_i=k)$, $\delta_j=\min\{\frac{1}{2},1-d_{TV}(\zeta_j,\zeta_j+1)\}$, and $\delta^*=\max_{j \in J}\delta_j$.
\end{theorem}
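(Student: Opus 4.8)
The plan is to run the argument of Theorem \ref{10:th1} in the special case $A_i=B_i=\{i\}$ and then sharpen it by conditioning on the value of $\zeta_i$ and rearranging the resulting series by an Abel-type shift of index; the hypothesis $\mathbb{E}(\zeta_i^3)<\infty$ guarantees absolute convergence and justifies all interchanges.

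\emph{The case $\mu_{\text{N}_{r,p}}=\mu_V$.} Proceeding exactly as in \eqref{10:m0}--\eqref{10:m2} (using $rq=p\,\mathbb{E}(V)$, the independence of $\zeta_i$ and $V_i:=V-\zeta_i$, and \eqref{10:m1}, which holds trivially since $p+q=1$), one finds that $\mathbb{E}[\mathscr{A}g_z(V)]$ is the sum over $i\in J$ of $p\,\mathbb{E}(\zeta_i)\,\mathbb{E}\bigl[\sum_{j=1}^{\zeta_i}\Delta g_z(V_i+j)\bigr]+q\,\mathbb{E}\bigl[\zeta_i\sum_{j=1}^{\zeta_i}\Delta g_z(V_i+j)\bigr]-\mathbb{E}\bigl[\zeta_i\sum_{j=1}^{\zeta_i-1}\Delta g_z(V_i+j)\bigr]$. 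Conditioning each inner expectation on $\zeta_i=k$ (using $\zeta_i\perp V_i$), setting $S_i(k):=\sum_{j=1}^{k}\mathbb{E}[\Delta g_z(V_i+j)]$ with $S_i(0)=0$, interchanging the $j$- and $k$-summations and reindexing $k\mapsto k+1$ in the third term, the three pieces collapse to
\[
\mathbb{E}[\mathscr{A}g_z(V)]=\sum_{i\in J}\sum_{k=1}^{\infty}\bigl[(p\,\mathbb{E}(\zeta_i)+qk)\gamma_{i,k}-(k+1)\gamma_{i,k+1}\bigr]\,S_i(k).
\]
Since $|S_i(k)|\le k\|\Delta g_z\|$, this gives $|\mathbb{E}[\mathscr{A}g_z(V)]|\le\|\Delta g_z\|\,U_J^{*}$ with $U_J^{*}$ the first expression in the statement, and Lemma \ref{10:le2}(ii) together with \eqref{10:kk8} yield the two displayed bounds.

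\emph{The case $\mu_{\text{N}_{r,p}}=\mu_V$, $\sigma_{\text{N}_{r,p}}=\sigma_V$.} Refine the identity above: in every inner sum write $\Delta g_z(V_i+j)=\Delta g_z(V_i+1)+\sum_{\ell=1}^{j-1}\Delta^2 g_z(V_i+\ell)$. By independence the $\Delta g_z(V_i+1)$ parts collect into a first-order term $\sum_{i\in J}a_i\,\mathbb{E}[\Delta g_z(V_i+1)]$ with $a_i:=p\,\mathbb{E}(\zeta_i)^2+q\,\mathbb{E}(\zeta_i^2)-\mathbb{E}(\zeta_i(\zeta_i-1))=\mathbb{E}(\zeta_i)-p\,\mathrm{Var}(\zeta_i)$, while the remaining second differences, after the same conditioning and reindexing, become $\sum_{i\in J}\sum_{k\ge2}\bigl[(p\,\mathbb{E}(\zeta_i)+qk)\gamma_{i,k}-(k+1)\gamma_{i,k+1}\bigr]R_i(k)$, where $R_i(k):=\sum_{j=1}^{k}\sum_{\ell=1}^{j-1}\mathbb{E}[\Delta^2 g_z(V_i+\ell)]$. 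The two moment conditions force $\sum_{i\in J}a_i=\mathbb{E}(V)-p\,\mathrm{Var}(V)=0$ (the $\zeta_i$ being independent), so one subtracts $\bigl(\sum_{i}a_i\bigr)\mathbb{E}[\Delta g_z(V+1)]=0$ and uses the telescoping identity $\mathbb{E}[\Delta g_z(V_i+1)]-\mathbb{E}[\Delta g_z(V+1)]=-\mathbb{E}\bigl[\sum_{\ell=1}^{\zeta_i}\Delta^2 g_z(V_i+\ell)\bigr]$ to turn the first-order term into second differences as well.

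\emph{Bounding the second differences.} From $\mathbb{E}[\Delta^2 g_z(V_i+\ell)]=\mathbb{E}[\Delta g_z(V_i+\ell+1)]-\mathbb{E}[\Delta g_z(V_i+\ell)]$ one gets $|\mathbb{E}[\Delta^2 g_z(V_i+\ell)]|\le\|\Delta g_z\|\,\mathscr{D}(V_i)$, and since $V_i=\sum_{j\ne i}\zeta_j$ is a sum of independent $\mathbb{Z}_+$-valued rvs, the standard total-variation smoothing bound for such sums (expressed through the truncated quantities $\delta_j$ with the constant $\sqrt{2/\pi}$) gives $\mathscr{D}(V_i)\le\sqrt{\tfrac{2}{\pi}}\bigl(\tfrac14+\sum_{j\ne i}\delta_j\bigr)^{-1/2}\le\sqrt{\tfrac{2}{\pi}}\bigl(\tfrac14+\sum_{j\in J}\delta_j-\delta^{*}\bigr)^{-1/2}$ uniformly in $i$. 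Substituting this, and using $\bigl|\mathbb{E}\bigl[\sum_{\ell=1}^{\zeta_i}\Delta^2 g_z(V_i+\ell)\bigr]\bigr|\le\mathbb{E}(\zeta_i)\|\Delta g_z\|\,\mathscr{D}(V_i)$ and $|R_i(k)|\le\tfrac{k(k-1)}{2}\|\Delta g_z\|\,\mathscr{D}(V_i)$, and collecting terms produces $|\mathbb{E}[\mathscr{A}g_z(V)]|\le\|\Delta g_z\|\,U_J^{*}$ with $U_J^{*}$ the second expression; Lemma \ref{10:le2}(ii) and \eqref{10:kk8} finish the proof. The main obstacle is the bookkeeping in the index shift --- making the three sums collapse cleanly into the coefficient $(p\,\mathbb{E}(\zeta_i)+qk)\gamma_{i,k}-(k+1)\gamma_{i,k+1}$, and, in the variance-matched case, isolating the first-order part precisely so that the cancellation $\sum_i a_i=0$ becomes available; the only genuinely external ingredient is the concentration bound on $\mathscr{D}(V_i)$.
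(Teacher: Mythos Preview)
Your proposal is correct and follows essentially the same route as the paper: specialize \eqref{10:m2} and \eqref{10:m4} to $A_i=B_i=\{i\}$, condition on $\zeta_i=k$, reindex the third sum to obtain the coefficient $(p\,\mathbb{E}(\zeta_i)+qk)\gamma_{i,k}-(k+1)\gamma_{i,k+1}$, and in the variance-matched case bound $|\mathbb{E}[\Delta^2 g_z(V_i+\ell)]|\le\|\Delta g_z\|\,\mathscr{D}(V_i)$ via the Mattner--Roos/Brown--Phillips smoothing inequality (the paper's Corollary~1.6 of \cite{BP}). Your explicit identification $a_i=\mathbb{E}(\zeta_i)-p\,\mathrm{Var}(\zeta_i)$ and the cancellation $\sum_i a_i=0$ are exactly the content of \eqref{10:m3} in the independent case.
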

\begin{proof}
Substituting $A_i=\{i\}$ in \eqref{10:m2}, we get
\begin{align*}
\mathbb{E}[\mathscr{A}g_z(V)]&=p\sum_{i\in J}\mathbb{E}(\zeta_i)\mathbb{E}\left[\sum_{j=1}^{\zeta_{i}}\Delta g_z(V_i+j)\right]+q\sum_{i\in J}\mathbb{E}\left[\zeta_i \sum_{j=1}^{\zeta_{i}}\Delta g_z(V_i+j)\right]\\
&~~~-\sum_{i\in J}\mathbb{E}\left[\zeta_i\sum_{j=1}^{\zeta_{i}-1}\Delta g_z(V_i+j)\right]\\
&=p\sum_{i\in J}\sum_{k=1}^{\infty}\sum_{j=1}^{k}\mathbb{E}(\zeta_i)\mathbb{E}\left[\Delta g_z(V_i+j)\right]\gamma_{i,k}+q\sum_{i\in J}\sum_{k=1}^{\infty}\sum_{j=1}^{k}k\mathbb{E}\left[\Delta g_z(V_i+j)\right]\gamma_{i,k}\\
&~~~-\sum_{i\in J}\sum_{k=2}^{\infty}\sum_{j=1}^{k-1}k\mathbb{E}\left[\Delta g_z(V_i+j)\right]\gamma_{i,k}\\
&=\sum_{i\in J}\sum_{k=1}^{\infty}[(p\mathbb{E}(\zeta_i)+qk)\gamma_{i,k}-(k+1)\gamma_{i,k+1}]\sum_{j=1}^{k}\mathbb{E}\left[\Delta g_z(V_i+j)\right].
\end{align*}
Therefore,
\begin{align*}
|\mathbb{E}[\mathscr{A}g_z(V)]|\le\|\Delta g_z\| \sum_{i\in J}\sum_{k=1}^{\infty}k|(p\mathbb{E}(\zeta_i)+qk)\gamma_{i,k}-(k+1)\gamma_{i,k+1}|.
\end{align*}
\vspace{-0.02cm}
Hence, using Lemma \ref{10:le2}(ii) and \eqref{10:kk8}, the result follows when $\mu_{\text{N}_{r,p}}=\mu_V$.\\
Next, substituting $A_i=B_i=\{i\}$ in \eqref{10:m4}, we get 
\begin{align*}
\mathbb{E}[\mathscr{A}g_z(V)]&=p\sum_{i\in J}\mathbb{E}(\zeta_i)\mathbb{E}\left[\sum_{j=1}^{\zeta_{i}}\sum_{\ell=1}^{j-1}\mathbb{E}[\Delta^2 g_z(V_i+\ell)|\zeta_{i}]\right]+q\sum_{i\in J}\mathbb{E}\left[\zeta_i \sum_{j=1}^{\zeta_{i}}\sum_{\ell=1}^{j-1}\mathbb{E}[\Delta^2 g_z(V_i+\ell)|\zeta_i]\right]\\
&~~~-\sum_{i\in J}[p\mathbb{E}(\zeta_i)^2+q\mathbb{E}(\zeta_i^2)-\mathbb{E}(\zeta_i(\zeta_{i}-1))]\mathbb{E}\left[\sum_{\ell=1}^{\zeta_{i}}\mathbb{E}[\Delta^2 g_z(V_i+\ell)|\zeta_{i}]\right]\\
&~~~-\sum_{i\in J}\mathbb{E}\left[\zeta_i\sum_{j=1}^{\zeta_{i}-1}\sum_{\ell=1}^{j-1}\mathbb{E}[\Delta^2 g_z(V_i+\ell)|\zeta_i]\right]\\
&=p\sum_{i\in J}\sum_{k=1}^{\infty}\sum_{j=1}^{k}\sum_{\ell=1}^{j-1}\mathbb{E}(\zeta_i)\mathbb{E}[\Delta^2 g_z(V_i+\ell)]\gamma_{i,k}+q\sum_{i\in J}\sum_{k=1}^{\infty}\sum_{j=1}^{k}\sum_{\ell=1}^{j-1}k\mathbb{E}[\Delta^2 g_z(V_i+\ell)]\gamma_{i,k}\\
&~~~-\sum_{i\in J}\sum_{k=1}^{\infty}\sum_{\ell=1}^{k}[p\mathbb{E}(\zeta_i)^2+q\mathbb{E}(\zeta_i^2)-\mathbb{E}(\zeta_i(\zeta_{i}-1))]\mathbb{E}[\Delta^2 g_z(V_i+\ell)]\gamma_{i,k}\\
&~~~-\sum_{i\in J}\sum_{k=2}^{\infty}\sum_{j=1}^{k-1}\sum_{\ell=1}^{j-1}k\mathbb{E}[\Delta^2 g_z(V_i+\ell)]\gamma_{i,k}\\
&=\sum_{i\in J}\sum_{k=1}^{\infty}\sum_{j=1}^{k}\sum_{\ell=1}^{j-1}[(p\mathbb{E}(\zeta_i)+qk)\gamma_{i,k}-(k+1)\gamma_{i,k+1}]\mathbb{E}[\Delta^2 g_z(V_i+\ell)]\\
&~~~-\sum_{i\in J}\sum_{k=1}^{\infty}\sum_{\ell=1}^{k}[p\mathbb{E}(\zeta_i)^2+q\mathbb{E}(\zeta_i^2)-\mathbb{E}(\zeta_i(\zeta_{i}-1))]\mathbb{E}[\Delta^2 g_z(V_i+\ell)]\gamma_{i,k}.
\end{align*}
Note that $|\mathbb{E}(\Delta^2 g_z(V_i+\cdot))|\le \delta \|\Delta g_z\|$, where $\delta=2\max_{i\in J}d_{TV}(V_i,V_i+1)$ (see Barbour and Xia \cite{BX}, and Barbour and \v{C}ekanavi\v{c}ius \cite[p. 517]{BC})). Also, from Corollary 1.6 of Brown and Phillips \cite{BP} (see also Remark 4.1 of Vellaisamy {\em et al.} \cite{VUC}), we have $\delta\le \sqrt{\frac{2}{\pi}}\left(\frac{1}{4}+\sum_{j\in J}\delta_j-\delta^*\right)^{-1/2}$ with $\delta_j=\min\{\frac{1}{2},1-d_{TV}(\zeta_j,\zeta_j+1)\}$ and $\delta^*=\max_{j \in J}\delta_j$. Therefore,
\begin{align*}
|\mathbb{E}[\mathscr{A}g_z(V)]|&\le \|\Delta g_z\|  \sqrt{\frac{2}{\pi}}\left(\frac{1}{4}+\sum_{j\in J}\delta_j-\delta^*\right)^{-\frac{1}{2}}\left\{\sum_{i\in J}\mathbb{E}(\zeta_i)|p\mathbb{E}(\zeta_i)^2+q\mathbb{E}(\zeta_i^2)-\mathbb{E}(\zeta_i(\zeta_{i}-1))|\right.\\
&~~~~\left.+\sum_{i\in J}\sum_{k=2}^{\infty}\frac{k(k-1)}{2}|(p\mathbb{E}(\zeta_i)+qk)\gamma_{i,k}-(k+1)\gamma_{i,k+1}|\right\}.
\end{align*}
Hence, using Lemma \ref{10:le2}(ii) and \eqref{10:kk8}, the result follows when $\mu_{\text{N}_{r,p}}=\mu_V$ and $\sigma_{\text{N}_{r,p}}=\sigma_V$.
\end{proof}

\noindent
Next, for $J=\{1,2,\ldots,n\}$, we present and compare our results for the sum of Bernoulli and geometric rvs as special cases. 
\newpage
\begin{remarks}\label{10:re11}
\begin{enumerate}
\item[(i)] Note that the expression $U_J^*$ in Theorem \ref{10:th2} is similar to the expression given in Theorems 3.1 and 4.1 of Vellaisamy {et al.} \cite{VUC}.
\item[(ii)] Let $V_2=\sum_{i=1}^{n}\zeta_i$ be the sum of independent Bernoulli rvs. Then, from Theorem \ref{10:th2}, we have
\begin{align}
\sup_{z\ge 0}|\mathbb{E}[\mathscr{A}g_z(V_2)]|\le \left(2p^{-(r+1)}-p^{-1}\right)\sum_{i=1}^{n}p_i(1-pq_i),\label{10:ak12}
\end{align} 
where $p_i=1-q_i=\mathbb{P}(\zeta_i=1)$ and $r(1-p)=p\sum_{i=1}^{n}p_i$. Note that we can not obtain the bound by matching mean and variance as $\mathbb{E}(V_2)>\mathrm{Var}(V_2)$. From Corollary 1 of Neammanee and Yonghint \cite{NY}, we have
\begin{align}
\sup_{z\ge 0}|\mathbb{E}[\mathscr{A}g_z(V_2)]|\le (2e^\lambda-1)\sum_{i=1}^{n}p_i^2,\label{10:ak11}
\end{align}
where $\lambda=\sum_{i=1}^{n}p_i$. Observe that the bound given in \eqref{10:ak12} is either comparable to or an improvement over the bound given in \eqref{10:ak11}, for example, some numerical comparisons are given in Table \ref{10:tab1}. 
\item[(iii)] Let $V_3=\sum_{i=1}^{n}\zeta_i$ be the sum of independent geometric rvs with $\mathbb{P}(\zeta_i=k)=q_i^k p_i$, for $k\in \mathbb{Z}_+$, and $q_i\le 1/2$. Then, from Theorem \ref{10:th2}, we have
\begin{align}
\sup_{z\ge 0}|\mathbb{E}[\mathscr{A}g_z(V_2)]|\le \left\{
\begin{array}{ll}
\vspace{0.3cm}
\displaystyle{\left(2p^{-(r+1)}-p^{-1}\right)\sum_{i=1}^{n}\frac{|p-p_i|q_i}{p_i^2}} & \text{if } \mu_{\text{N}_{r,p}}=\mu_{V_3};\\
\vspace{0.1cm}
\displaystyle{3\left(2p^{-(r+1)}-p^{-1}\right)\sqrt{\frac{2}{\pi}}\left(\sum_{j=1}^{n}q_j-\frac{1}{4}\right)^{-1/2} } & \text{if } \mu_{\text{N}_{r,p}}=\mu_{V_3} \\
\times \displaystyle{\sum_{i=1}^{n}\frac{|p-p_i|q_i^2}{p_i^3}} & \text{ and }\sigma_{\text{N}_{r,p}}=\sigma_{V_3},
 \end{array}\right.\label{10:lol}
\end{align} 
where  $\sum_{i=1}^{n}q_i> 1/4$ when $\mu_{\text{N}_{r,p}}=\mu_{V_3}$ and $\sigma_{\text{N}_{r,p}}=\sigma_{V_3}$. Note that if $p_i=p$, for all $1\le i\le n$, then $\sup_{z\ge 0}|\mathbb{E}[\mathscr{A}g_z(V_2)]|=0$, as expected. From Theorem 1 and Corollary 2 of Neammanee and Yonghint \cite{NY}, we have
\begin{align}
\sup_{z\ge 0}|\mathbb{E}[\mathscr{A}g_z(V_2)]|\le (2e^\lambda-1)\sum_{i=1}^{n}\frac{(8-7p_i)q_i^2}{p_i^3},\label{10:ny}
\end{align}
where $\lambda=\sum_{i=1}^{n}(q_i/p_i)$. The above bound is better than the bound given by Jiao and Karoui \cite{JK} (shown in Remark 1(1) by Neammanee and Yonghint \cite{NY}). Note that our bound is better than the bound given in \eqref{10:ny}. For instance, let $n=75$ and $q_i$, $1\le i\le 75$, be defined as follows:
 \begin{table}[H]
  \centering
  \caption{The values of $q_i$}
  \label{10:tab2}
  \begin{tabular}{cccccccccccc}
\toprule
$i$ & $q_i$ & $i$ & $q_i$ & $i$ & $q_i$ & $i$ & $q_i$ & $i$ & $q_i$& $i$ & $q_i$\\
\midrule
0-10 & 0.05 &11-20 & 0.10 & 21-30 & 0.15 & 31-40 & 0.20 &  41-50 & 0.25  & 51-75 & 0.30\\
\bottomrule
\end{tabular}
\end{table}

\noindent
Then, choose $r=n$ if $\mu_{\text{N}_{r,p}}=\mu_V$, the following table gives a comparison between our bounds and the existing bounds under Bernoulli and geometric setup.
\begin{table}[H]  
  \centering
  \caption{Comparison of bounds.}
  \label{10:tab1}
  \begin{tabular}{|l|ll|lll|}
\hline
\multirow{3}{*}{$n$} & \multicolumn{2}{c|}{For Bernoulli setup} & \multicolumn{3}{c|}{For geometric setup}\\
\cline{2-6}
& \multirow{2}{*}{From \eqref{10:ak11}}&\multirow{2}{*}{From \eqref{10:ak12}}&\multirow{2}{*}{From \eqref{10:ny}} & From \eqref{10:lol}  & From \eqref{10:lol} \\
&&&& ($\mu_{\text{N}_{r,p}}=\mu_V$) & ($\mu_{\text{N}_{r,p}}=\mu_V$ and $\sigma_{\text{N}_{r,p}}=\sigma_V$)\\
\hline
$10$ & $21.5280$ & $22.2920$ & $0.09390$ & $9.47\times 10^{-17}$ & $9.07\times 10^{-17}$\\
$20$ & $158.986$ & $161.239$ &$2.53041$ & $0.41416$ & $0.06280$ \\
$30$ & $1438.02$ & $1348.40$ &$60.4516$ & $7.17325$ & $1.23534$ \\
$40$ & $22467.0$ & $17633.1$ &$2117.84$ & $195.211$ & $27.7360$ \\
$50$ & $745974$ & $423881$ &$142995$  & $7902.23$ & $1079.63$ \\
\hline
\end{tabular}
\end{table}

\noindent
For large values of $n$, note that our bounds are an improvement over the existing bounds for various values of $q_i$. Moreover, for the geometric setup, the bounds are much sharper than the existing bounds as NB and the sum of geometric rvs consists of similar properties. Also, observe that the bounds computed by matching mean and variance are better than the bounds computed by matching mean only, as expected.
\end{enumerate}
\end{remarks}

\section{An Application to CDO}\label{10:sec4}
The CDO is a financial tool that transfers a pool of assets such as auto loans, credit card debt, mortgages, and corporate debt, among many others, into a product and sold to investors. The assets are divided into several tranches, that is, the set of repayment. Each tranche has various credit quality and risk levels. The primary tranches in CDOs are senior, mezzanine, and equity. The investors can opt for multiple tranches to invest as per their interest. For more details, see Neammanee and Yonghint \cite{NY}, Yonghint et al. \cite{YNC2}, Kumar \cite{k2021}, and reference therein.\\
It is known that the CDO occurs in both, locally dependent and independent setup (see Yonghint {\em et al.} \cite{YNC2} and Neammanee and Yonghint \cite{NY} for more details), and therefore, the results obtained in this paper are useful in applications. Consider the similar type of CDO discussed by Yonghint {\em et al.} \cite{YNC2}. Suppose there are $N$ assets that have a constant recovery rate $R$ then the percentage cumulative loss in CDO up to time $T$ is 
\begin{align}
L(T)=\frac{1-R}{N}\sum_{i=1}^{N}\xi_i,\label{10:ak13}
\end{align}
where $\xi_i={\bf 1}_{\{\tau_i \le T\}}$, $\tau_i$ is the default time of the $i^{\text{th}}$ asset, and ${\bf 1}_{A}$ denotes the indicator function of $A$. The expression in \eqref{10:ak13} can be rewritten as
\begin{align}
\mathbb{E}[(L(T ) - z^*)^+]=\frac{1-R}{N}\mathbb{E}[(V_4-z^*)^+],\label{8:jjj}
\end{align} 
where $z^*=(1-R)z/N>0$ is the attachment or the detachment point of the tranche and $V_4=\sum_{i=1}^{N} \xi_i$. Therefore, the problem is reduced to obtain error bounds for $\mathbb{E}[(V_4-z^*)^+]$, and hence, Corollary \ref{10:cor2} and Remarks \ref{10:re11}(ii) are useful in applications. For more details, we refer the reader to Yonghint {\em et al.} \cite{YNC2}, Kumar \cite{k2021}, and reference therein.\\
Next, we compare our results with the existing results under the locally dependent and independent setup. For the independent setup, Neammanee and Yonghint \cite{NY} gives the bound discussed in \eqref{10:ak11} and, for the locally dependent setup, from Theorem 2 of Yonghint {\em et al.} \cite{YNC2}, we have
\begin{align}
\sup_{z\ge 0}|\mathbb{E}[\mathscr{A}g_z(V^*)]|\le  \left(2e^\lambda-1\right)\sum_{i=1}^{n}\left(\sum_{j\in A_i\backslash \{i\}}p_{i,j}^*+\sum_{j\in A_i}p_i^*p_j^*\right),\label{10:lll}
\end{align} 
where $\lambda=\sum_{i=1}^{n}p_i^*$, $p_{i}^*=\mathbb{P}(\xi_i=1)$, and $p_{i,j}^*=\mathbb{P}(\xi_i=1,\xi_j=1)$. Note that our bound given in \eqref{10:ak15} is better than the bound given in \eqref{10:lll}. For instance, let $r=n$, $p_{i,j}^*=p^*$, $1\le i,j\le n$, $A_i=\{i-1,i,i+1\}$, and $q_i$ as defined in Table \ref{10:tab2}, $1\le i\le 75$, then, the following table gives a comparison between the upper bounds given in \eqref{10:ak12}, \eqref{10:ak11}, \eqref{10:ak15}, and \eqref{10:lll} for different values of $p^*$ and $q_i$.
\begin{table}[H]
  \centering
  \vspace{-0.2cm}
  \caption{Comparison for the locally dependent and independent setup.}
    \vspace{-0.2cm}
  \begin{tabular}{|l|ll|l|ll|}
\hline
\multirow{2}{*}{$n$}& \multicolumn{2}{c|}{For independent setup} &\multicolumn{3}{c|}{For locally dependent setup}\\
\cline{2-6}
 & From \eqref{10:ak11} & From \eqref{10:ak12} & $p^*$ & From \eqref{10:lll} & From \eqref{10:ak15} \\
\hline
$15$ & $64.0726$ & $65.9832$   & \multirow{2}{*}{0.4} & $247.274$ & $206.347$ \\
$35$ & $5747.39$ & $4964.44$   & & $22958.3$ & $15690.1$ \\
\hline
$55$  & $6.79\times 10^6$ & $3.00\times 10^6$ & \multirow{2}{*}{0.7} & $3.37\times 10^7$ & $1.39\times 10^7$\\
$75$ &  $4.49\times 10^{10}$ & $6.22\times 10^{9}$  && $2.31\times 10^{11}$ & $3.00\times 10^{10}$\\
\hline
\end{tabular}
\end{table}
\noindent
For large values of $n$, note that our bounds are better than the existing bounds for various values of $p^*$ and $q_i$.

\section*{Appendix A: Some Useful Inequalities}\label{10:app}
Here we give some inequalities and their proofs that have used in Lemmas \ref{10:le2} and \ref{10:le3}. Recall that $f_z$ is a call function, defined in \eqref{10:cf}, and $\text{N}_{r,p}$ follows the negative binomial distribution, defined in \eqref{10:nb}. The following lemma gives uniform and non-uniform upper bounds for $\mathbb{E} [f_z(\text{N}_{r,p})]=\mathbb{E} [(\text{N}_{r,p}-z)^+]$.

\begin{lemma}\label{10:po1}
The following inequalities hold:
\begin{enumerate}
\item[(i)] $\mathbb{E} [(\text{N}_{r,p}-z)^+]\le \frac{rq}{p}$, for $z\ge 0$.
\item[(ii)] $\mathbb{E} [(\text{N}_{r,p}-z)^+]\le \frac{r(r+1)q^2}{zp^2}$, for $z>1$.
\end{enumerate}
\end{lemma}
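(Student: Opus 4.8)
The plan is to prove both bounds by a direct computation of $\mathbb{E}[(\text{N}_{r,p}-z)^+]$ using the tail-sum representation of a nonnegative random variable. Since $(\text{N}_{r,p}-z)^+$ is nonnegative, I would write
\begin{align*}
\mathbb{E}[(\text{N}_{r,p}-z)^+]=\sum_{k=0}^{\infty}(k-z)^+\,\mathbb{P}(\text{N}_{r,p}=k)=\sum_{k>z}(k-z)\binom{r+k-1}{k}p^r q^k,
\end{align*}
and then bound the summand. For part (i), the crude idea is $(k-z)^+\le k$ for $z\ge 0$, so $\mathbb{E}[(\text{N}_{r,p}-z)^+]\le \mathbb{E}[\text{N}_{r,p}]=rq/p$, using the well-known mean of the negative binomial with pmf \eqref{10:nb}. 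That gives (i) immediately.

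For part (ii), where $z>1$, the bound $(k-z)^+\le k$ is too lossy; instead I would exploit that the sum only runs over $k\ge z>1$, so $k\ge 2$ there, and use $(k-z)^+\le k \le \frac{k(k-1)}{z}$ is not quite right — rather, since each term has $k>z$, we have $(k-z)^+ = k-z < k \le k(k-1)/(z-?)$; the clean route is to note $(k-z)^+\le \dfrac{k(k-1)}{z}$ is false in general, so instead bound $(k-z)^+\le \dfrac{(k-z)^+ \cdot k}{z}\le \dfrac{k^2}{z}$ using $k>z$, hence
\begin{align*}
\mathbb{E}[(\text{N}_{r,p}-z)^+]\le \frac{1}{z}\sum_{k>z}k^2\binom{r+k-1}{k}p^r q^k\le \frac{\mathbb{E}[\text{N}_{r,p}^2]}{z}.
\end{align*}
Then I would compute $\mathbb{E}[\text{N}_{r,p}^2]$, or more conveniently the factorial moment $\mathbb{E}[\text{N}_{r,p}(\text{N}_{r,p}+1)] = r(r+1)q^2/p^2$ (which follows from differentiating the pgf, or from the known formula for negative binomial factorial moments). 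Since $k^2\le k(k+1)$, this yields $\mathbb{E}[(\text{N}_{r,p}-z)^+]\le r(r+1)q^2/(zp^2)$, which is exactly (ii). I should double-check whether the cleanest bound uses $k^2$, $k(k+1)$, or $k(k-1)$; the target $r(r+1)q^2/p^2$ matches $\mathbb{E}[\text{N}_{r,p}(\text{N}_{r,p}+1)]$, so bounding $(k-z)^+ \le k^2/z \le k(k+1)/z$ on the range $k>z>1$ is the version to use.

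The main obstacle is really just bookkeeping: making sure the inequality $(k-z)^+\le k(k+1)/z$ genuinely holds on the summation range (it needs $k>z$, which is automatic since terms with $k\le z$ vanish), and recalling the correct closed form for the first two (factorial) moments of the negative binomial under the parametrization \eqref{10:nb}. No delicate estimates are needed — both parts reduce to moment computations once the pointwise bound on $(k-z)^+$ is chosen correctly.
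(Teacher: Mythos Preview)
Your argument for part~(i) is correct and coincides with the paper's: both use $(k-z)^+\le k$ to reduce to $\mathbb{E}[\text{N}_{r,p}]=rq/p$.

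For part~(ii) there is a genuine slip. You claim $\mathbb{E}[\text{N}_{r,p}(\text{N}_{r,p}+1)]=r(r+1)q^2/p^2$, but this is false: a quick computation (or the standard factorial-moment formula for the negative binomial in the parametrization of the paper) gives
\[
\mathbb{E}[\text{N}_{r,p}(\text{N}_{r,p}-1)]=\frac{r(r+1)q^2}{p^2},\qquad
\mathbb{E}[\text{N}_{r,p}(\text{N}_{r,p}+1)]=\frac{r(r+1)q^2}{p^2}+\frac{2rq}{p},
\]
so your chain $(k-z)^+\le k^2/z\le k(k+1)/z$ leads to a bound strictly larger than the target. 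The fix is to use the pointwise inequality $(k-z)^+\le k(k-1)/z$, which \emph{is} valid for all $k\ge 0$ once $z>1$: for $k\le z$ the left side is $0$, and for $k>z$ the inequality $z(k-z)\le k(k-1)$ rearranges to $k^2-(z+1)k+z^2\ge 0$, whose discriminant $(z+1)^2-4z^2=-(3z+1)(z-1)$ is negative for $z>1$. With this bound in hand your method gives exactly $\mathbb{E}[\text{N}_{r,p}(\text{N}_{r,p}-1)]/z=r(r+1)q^2/(zp^2)$.

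The paper reaches the same conclusion by direct manipulation of the sum from $k=\lceil z\rceil$, using the two separate estimates $1/k\le 1/\lceil z\rceil\le 1/z$ and $k-z\le k-1$ (valid since $z>1$), which combined are equivalent to the single inequality $(k-z)\le k(k-1)/z$ above. So once your factorial-moment identification is corrected, the two arguments are essentially the same.
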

\begin{proof}
\begin{enumerate}
\item[(i)] For $z\ge 0$, we have
\begin{align*}
\mathbb{E} [(\text{N}_{r,p}-z)^+]&=\sum_{k=1}^{\infty}(k-z)^+ \binom{r+k-1}{k}p^r q^k\le rp^r\sum_{k=1}^{\infty}\binom{r+k-1}{k-1}q^k=\frac{rq}{p}.
\end{align*}
This proves (i).
\item[(ii)] For $z>1$, we have
\begin{align*}
\mathbb{E} [(\text{N}_{r,p}-z)^+]&=\sum_{k=\ceil{z}}^{\infty}(k-z)\binom{r+k-1}{k}p^rq^k\\
&\le \frac{p^r}{\ceil{z}}\sum_{k=\ceil{z}}^{\infty}r (r+1)\ldots (r+k-1)\frac{(k-z)}{(k-1)!}q^k\\
&\le \frac{p^r}{z}\sum_{k=\ceil{z}}^{\infty}\frac{r (r+1)\ldots (r+k-1)}{ (k-2)!}q^k\\
&\le \frac{r(r+1)p^{r}}{z}\sum_{k=2}^{\infty}\binom{r+k-1}{k-2}q^k=\frac{r(r+1)q^2}{zp^2}.
\end{align*}
This proves (ii).
\end{enumerate}
\end{proof}

\noindent
Next, the following lemma gives some inequalities related to the parameters $r$ and $p$ of $\text{N}_{r,p}$.
\begin{lemma}\label{10:po2}
The following inequalities hold:
\begin{enumerate}
\item[(i)] $\displaystyle{\sum_{j=1}^{\infty}\frac{(r+k)\ldots (r+j+k-1)}{k(k+1)\ldots (j+k-1)}q^j\le p^{-(r+1)}-1,~\text{for }k\ge 1}$.
\item[(ii)] $\displaystyle{\sum_{j=1}^{\infty}\frac{(r+k)\ldots (r+j+k-1)}{(k+1)\ldots (j+k)}q^{j}\le\frac{p^{-r}-1}{rq}-1, ~\text{for all }k\ge 1}$.
\item[(iii)] $\displaystyle{\sum_{j=2}^{\infty}\frac{(r+k+1)\ldots (r+j+k-1)}{(k+1)\ldots (j+k-1)}q^{j}\le \frac{p^{-(r+1)}-1}{r}-q},~\text{for } k \ge 1$.
\item[(iv)] $\displaystyle{\sum_{j=1}^{\infty}\frac{(r+k+1)\ldots (r+j+k)}{k(k+1)\ldots (j+k-1)}q^{j}\le \frac{p^{-(r+2)}-1}{(r+1)q}-1}, ~\text{for }k\ge 2$.
\item[(v)] $\displaystyle{\sum_{j=1}^{\infty}\frac{(r+k)\ldots (r+j+k-1)}{k(k+1)\ldots (j+k)}q^{j}\le \frac{p^{-r}-1}{r(r+1)q^2}-\frac{1}{2}},~\text{for }k\ge 2$.
\end{enumerate}
\end{lemma}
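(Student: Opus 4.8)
The unifying idea is that every sum on the left is (up to shifting indices and peeling off initial terms) a tail of the negative binomial series $\sum_{j\ge 0}\binom{s+j-1}{j}q^j = p^{-s}$ for a suitable shape parameter $s$, after bounding the ratio of falling/rising factorials by the corresponding binomial coefficient. Concretely, for any $k\ge 1$ one has the monotonicity
\[
\frac{(r+k)\cdots(r+j+k-1)}{k(k+1)\cdots(j+k-1)}\le \frac{(r+1)\cdots(r+j)}{1\cdot 2\cdots j}=\binom{r+j}{j},
\]
since each factor $\tfrac{r+k+\ell}{k+\ell}$ is decreasing in $k$. So the plan for part (i) is: apply this termwise bound to get $\sum_{j\ge 1}\binom{r+j}{j}q^j$, then recognize $\sum_{j\ge 0}\binom{r+j}{j}q^j=\sum_{j\ge 0}\binom{(r+1)+j-1}{j}q^j=p^{-(r+1)}$, and subtract the $j=0$ term to obtain the bound $p^{-(r+1)}-1$. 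This handles (i) cleanly.

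For parts (ii)–(v) I would run the same template with the appropriate shift. In (ii) the denominator is $(k+1)\cdots(j+k)$, which has one more factor than the numerator, so after the analogous monotonicity bound (now comparing against the $k=1$ case) the sum is $\tfrac{1}{r q}\sum_{j\ge 1}\binom{r+j}{j+1}q^{j+1}=\tfrac{1}{rq}\bigl(\sum_{m\ge 2}\binom{r+m-1}{m}q^m\bigr)=\tfrac{1}{rq}(p^{-r}-1-rq)=\tfrac{p^{-r}-1}{rq}-1$; note the algebraic identity $\sum_{m\ge 0}\binom{r+m-1}{m}q^m=p^{-r}$ and peel off $m=0,1$. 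Part (iii) is the same mechanism but the rising factorial starts at $r+k+1$, so the effective shape parameter is $r+1$ rather than $r$, giving $\tfrac{p^{-(r+1)}-1}{r}-q$ after peeling the $j=1$ term. Parts (iv) and (v) are again shifts: in (iv) numerator $(r+k+1)\cdots(r+j+k)$ against denominator $k\cdots(j+k-1)$ gives shape parameter $r+2$ and one extra denominator factor, so the bound is $\tfrac{p^{-(r+2)}-1}{(r+1)q}-1$; in (v) two extra denominator factors yield $\tfrac{p^{-r}-1}{r(r+1)q^2}-\tfrac12$. The restriction $k\ge 2$ in (iv)–(v) is exactly what is needed so that the peeled-off initial terms (which at $k=2$ take the stated values $1$ and $\tfrac12$) dominate the corresponding terms for larger $k$.

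The one point requiring genuine care — the main obstacle — is making the "peel off the first few terms" step rigorous in (ii)–(v), i.e. verifying that after the termwise monotonicity bound the leftover finite number of low-order terms is at least the constant being subtracted ($1$, $q$, $1$, or $\tfrac12$ respectively), uniformly in $k$ over the stated range. This is where the hypothesis $k\ge 1$ versus $k\ge 2$ actually bites: for (iv) and (v) the $k=2$ value of the first retained term is precisely the subtracted constant, and for $k>2$ that term is smaller, so one must instead argue the bound by comparing against the $k=2$ instance (or, equivalently, absorb the discrepancy into the still-positive higher terms). I would organize the write-up so that each part first states the monotonicity-in-$k$ reduction, then the identification with a shifted negative binomial generating function, then the explicit bookkeeping of the two or three peeled terms; the computations are routine geometric-series manipulations once the shape parameter and the index shift are pinned down.
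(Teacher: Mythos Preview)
Your approach is essentially the same as the paper's: the paper proves (i) by induction on $k$, the inductive step being exactly the term-wise inequality
\[
\frac{(r+m+1)\cdots(r+j+m)}{(m+1)\cdots(j+m)}=\frac{m(r+j+m)}{(r+m)(j+m)}\cdot\frac{(r+m)\cdots(r+j+m-1)}{m\cdots(j+m-1)}\le\frac{(r+m)\cdots(r+j+m-1)}{m\cdots(j+m-1)},
\]
which is your monotonicity-in-$k$ observation; the base case $k=1$ is computed as the negative binomial series $p^{-(r+1)}-1$. Parts (ii)--(v) are declared analogous, and your shifted-series identifications at the minimal $k$ are precisely what that analogy entails (indeed your computations at $k=1$ or $k=2$ give constants with $r+1$ or $r+2$ in the denominator, slightly sharper than what the lemma states, which is harmless).

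Your final paragraph overcomplicates a non-issue. Once you have reduced to the smallest admissible $k$ by monotonicity, the ``peeling'' is done at that single $k$ only; there is no uniformity-in-$k$ question left, because the infinite sum on the left is already bounded by its value at $k=1$ (resp.\ $k=2$), and at that value the peeled constants are exactly the $1$, $q$, $1$, $\tfrac12$ appearing on the right. The worry that ``for $k>2$ that term is smaller'' is irrelevant: you never peel at $k>2$. So the write-up should simply be: (a) term-wise monotonicity gives the sum at $k$ is at most the sum at the minimal $k$; (b) at the minimal $k$ the sum is identified with a tail of $\sum_{m\ge 0}\binom{s+m-1}{m}q^m=p^{-s}$ for the appropriate $s$; (c) subtract the first one or two terms.
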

\begin{proof}
Note that, for $k=1$, 
\begin{align*}
\sum_{j=1}^{\infty}\frac{(r+1)\ldots (r+j)}{1.2\ldots j}q^j&=\sum_{j=1}^{\infty}\binom{r+j}{j}q^j=p^{-(r+1)}-1.
\end{align*}
Therefore, the inequality (i) holds for $k=1$. Now, suppose it holds for $k=m$, that is,
\begin{align}
\sum_{j=1}^{\infty}\frac{(r+m)\ldots (r+j+m-1)}{m(m+1)\ldots (j+m-1)}q^j\le p^{-(r+1)}-1.\label{10:rs10}
\end{align}
Observe that
\begin{align*}
\sum_{j=1}^{\infty}\frac{(r+m+1)\ldots (r+j+m)}{(m+1)\ldots (j+m)}q^j&=\sum_{j=1}^{\infty}\frac{m(r+j+m)}{(r+m)(m+j)}\frac{(r+m)\ldots (r+m+j-1)}{m(m+1)\ldots (j+m-1)}q^j\\
&\le\sum_{j=1}^{\infty}\frac{(r+m)\ldots (r+m+j-1)}{m(m+1)\ldots (j+m-1)}q^j\\
&\le p^{-(r+1)}-1\quad \text{(using \eqref{10:rs10}).}
\end{align*}
This implies that the inequality (i) holds for $k=m+1$, and hence it holds for all $k\ge  1$. Following similar steps, the inequalities (ii)-(v) can be easily proved.
\end{proof}

\bibliographystyle{PV}
\bibliography{NBATCF}

\end{document}